\newcolumntype{Y}{>{\RaggedRight\arraybackslash}X}
\theoremstyle{plain} 
\newtheorem{theorem}{Theorem}[section]
\newtheorem{lemma}[theorem]{Lemma}
\newtheorem{proposition}[theorem]{Proposition}
\theoremstyle{definition} 
\theoremstyle{remark} 
\title{$LCT–SIND^3y$: Sparse Identification of Nonlinear Distributed-Delay Dynamics via the Linear Chain Trick}
\author{Mohammed Alanazi$^{1}$ \qquad Majid Bani-Yaghoub$^{1}$\footnote{Corresponding author. Email: baniyaghoubm@umkc.edu}}
\affil{\small $^{1}$ Division of Computing, Analytics \& Mathematics, School of Science and Engineering, University of Missouri–Kansas City, 5100 Rockhill Rd., Kansas City, Missouri 64110, USA}
\date{\vspace{-5ex}}
\begin{document}
\maketitle

\begin{abstract}
The Sparse Identification of Nonlinear Dynamics (SINDy) framework has been frequently used to discover parsimonious differential equations governing natural and physical systems.  This includes recent extensions to SINDy that enable the recovery of discrete delay differential equations, where delay terms are represented explicitly in the candidate library. However, such formulations cannot capture the distributed delays that naturally arise in biological, physical, and engineering systems. In the present work, we extend SINDy to identify distributed-delay differential equations by incorporating the Linear Chain Trick (LCT), which provides a finite-dimensional ordinary differential equation representing the distributed memory effects. Hence, SINDy can operate in an augmented state space using conventional sparse regression while preserving a clear interpretation of delayed influences via the chain trick. From time-series data, the proposed method jointly infers the governing equations, the mean delay, and the dispersion of the underlying delay distribution. We numerically verify the method on several models with distributed delay, including the logistic growth model and a Hes1--mRNA gene regulatory network model. We show that the proposed method accurately reconstructs distributed delay dynamics, remains robust under noise and sparse sampling, and provides a transparent, data-driven approach for discovering nonlinear systems with distributed-delay.

\end{abstract}

\noindent \textbf{Keywords:} Nonlinear Dynamics, Sparse Identification, Chain Trick, Distributed-delay Differential Equations \\

\noindent \textbf{MSC codes}:  34K05, 65L03, 65Q20, 34K99\\

\newpage
\section{Introduction}
There have been intensive activities in developing and applying data-driven modeling techniques in the past decade. Methods such as Sparse Identification of Nonlinear Dynamics (SINDy) and Dynamic Mode Decomposition (DMD), along with many of their variants, have become essential tools for discovering governing models associated with natural and physical systems \cite{brunton2016discovering,kutz2016dynamic,menta2025uncovering, schmid2010dmd,schmid2010dynamic}. On the other hand, machine learning models, ranging from kernel methods and deep neural networks to neural ODEs \cite{arjmand2025comparative,chen2018neural, lecun2015deep, scholkopf2002learning, soysal2022machine}, offer flexible, high-accuracy predictors, but often lack interpretability. In addition, many real systems are intrinsically nonlinear and may involve time delays which are not tractable by machine learning models. Among the equation-discovery approaches, SINDy framework \cite{brunton2016sindy} learns explicit governing equations directly from measured state trajectories. SINDy was originally introduced for ordinary differential equations and later was  extended partial-differential settings. It operates by pairing time series data with a user-specified library of candidate models. Sparse identification is then used to select a small subset of these
candidates and estimate their coefficients, resulting in interpretable models whose terms align with underlying mechanisms.  \\
\indent SINDy has been extended beyond ordinary differential equations to several classes of systems, including partial differential equations \cite{rudy2017pdefind}, stochastic differential equations \cite{boninsegna2018sde}, and stochastic PDEs \cite{mathpati2024vbspde}. More recently, a growing literature adapts SINDy to discrete delay differential equations (DDEs) using diverse strategies, for example, introducing explicit delayed terms with sparse selection, learning parameterized delay dictionaries, or approximating DDEs via pseudospectral collocation before identification \cite{kopeczi-bocz2024sindy-delay,sandoz2023sindy-dde,wu2023parameterizedDDE,bozzo2024psc-sindy,pecile2025dde}. Despite these advances, there is still a need to study this problem.\\
\indent Köpeczi-Bócz et al.\ \cite{kopeczi-bocz2024sindy-delay} augment the SINDy library with a fixed set of preselected lagged regressors (e.g., $x(t-\tau_j)$)
and apply sequential thresholded least squares with a $\lambda$–sweep; delays are inferred within a single fit as those lagged terms that remain active in a
low-error sparse model. In contrast, Sandoz et al.\ \cite{sandoz2023sindy-dde} assume a single delay and perform a bi-level search: for each candidate $\tau$
they build a lagged library and fit a sparse model via greedy backward elimination, then select the best $\tau$ by minimizing a trajectory reconstruction error across candidates. Building on delay-augmented SINDy, Pecile et al.\ \cite{pecile2025dde} cast delay selection as a continuous optimization problem solved via Bayesian optimization: each iteration proposes a delay, forms off-grid lagged regressors by interpolation, refits a sparse model, and scores its data fit to guide the next query.
To mitigate the library blow-up that occurs when many lagged terms are enumerated, Wu~\cite{wu2023parameterizedDDE} proposes a parameterized dictionary in which delay
values are treated as continuous variables and estimated off-grid. Model discovery is posed as a mixed-integer nonlinear program that jointly selects a sparse set of active atoms by binary variables and fits their continuous parameters (including delays), yielding compact models without enumerating a dense grid of candidate lags. Bozzo et al. \cite{bozzo2024psc-sindy} recast delay differential equations as a finite-dimensional ODE via Chebyshev pseudospectral collocation: delayed states are evaluated at collocation nodes and off-grid delays are obtained by interpolation; SINDy is then applied to this surrogate to obtain a sparse model, leveraging mature ODE–SINDy tooling while avoiding the enumeration of large libraries of lagged regressors.\\
\indent The methods mentioned above are only applicable to discrete delays either by enumerating lagged regressors, optimizing a single lag value, or working in collocation coordinates, so they succeed primarily when any underlying distributed delay is sufficiently narrow to appear effectively discrete \cite{kopeczi-bocz2024sindy-delay,sandoz2023sindy-dde,wu2023parameterizedDDE,bozzo2024psc-sindy,pecile2025dde}. Because delays in many biological and engineered systems are distributed, and sometimes broad, discrete-delay SINDy variants can misidentify the model structure. To address this gap, we propose $LCT-SIND^3y$, which leverages the LCT to obtain from the data an augmented state that represents a Gamma/Erlang distributed memory of the dynamics. We then append the terminal chain variable (the distributed-delay state) to the SINDy library, enabling sparse discovery of distributed-delay differential equations while retaining explicit, interpretable right-hand sides. Our method recovers distributed–delay differential equations and the delay term via a bilevel search; even with sparse, noisy data the reconstructed delayed state closely matches the ground truth, giving SINDy a decisive advantage in identifying the correct governing system. To the best of our knowledge, this is the first SINDy framework to directly identify distributed-delay structure via an LCT-based augmentation, providing mechanistic interpretability and improved fidelity when delays are broadly distributed.\\
\indent The rest of this paper is organized as follows. Section \ref{SINDy_Preliminaries} provides the original SINDy framework as preliminaries. Section~\ref{sec:lct-sindy} introduces the  proposed LCT–SINDy method. Subsection \ref{Dis_delays_and} provides details of the Linear Chain Trick, showing how a distributed-delay term (history integral) can be represented by a linear ODE chain of auxiliary variables \cite{bani2017analysis,bani2015oscillatory, bani2020delay, bani2015understanding}. subsection~\ref{Meth_cons} presents the methodological construction of $LCT-SIND^3y$, explaining how the LCT chain is built from data and incorporated into the identification procedure. In Section~\ref{sec4}, we apply the method to a distributed-delay DDE of the Hes1–mRNA system as an illustrative case study, showing when discrete-delay approaches fail and how our approach recovers the correct dynamics when the delay is distributed or nearly discrete. We also show numerically that the reconstructed delayed state achieves low relative error under sparse and noisy sampling. We also present a fully noisy, end-to-end experiment demonstrating that $LCT-SIND^3y$ successfully identifies the governing equations and yields a delay estimate close to the ground truth. Section \ref{Disc} provides a discussion of the main results and the proposed algorithm for distributed delay model discovery.


\begin{table}[H]
\centering
\footnotesize
\caption{Representative SINDy-based approaches for delay differential equations (DDEs).}
\vspace{-.3cm}
\label{tab:sindy_dde_methods}
\setlength{\tabcolsep}{6pt}
\renewcommand{\arraystretch}{1.15}
\begin{tabularx}{\textwidth}{>{\RaggedRight\arraybackslash}p{0.34\textwidth} Y}
\toprule
\textbf{Authors and Year} & \textbf{Core idea of the numerical algorithm} \\
\midrule

Sandoz \emph{et~al.} (2023)~\cite{sandoz2023sindy-dde}
& Augment the SINDy library with lagged regressors, use greedy backward elimination, and select the delay via trajectory–error over a delay grid. \\

K\"opeczi-B\'ocz \emph{et~al.} (2023)~\cite{kopeczi-bocz2024sindy-delay}
& Augments the SINDy library with lagged regressors \(x(t-\tau_j)\) at pre-specified candidate delays; applies STLSQ (\(\lambda\)-sweep) to promote sparsity and select the active delayed terms  \\

Wu (2023)~\cite{wu2023parameterizedDDE}
& Build a parameterized dictionary with atoms encoding continuous delay/shape; jointly learn coefficients and delay parameters via nonconvex optimization. \\

Bozzo, Breda \& Tanveer (2024)~\cite{bozzo2024psc-sindy}
& Replace the DDE with a pseudospectral–collocation surrogate ODE and apply SINDy; infer off-grid delays through the collocation mapping. \\

Pecile \emph{et~al.} (2025)~\cite{pecile2025dde}
& Explore the continuous delay space with Bayesian optimization; at each query, refit a sparse model and score it to guide the search. \\
\bottomrule
\end{tabularx}
\end{table}

\section{SINDy Preliminaries}
\label{SINDy_Preliminaries}
We briefly review the Sparse Identification of Nonlinear Dynamics (SINDy) framework~\cite{brunton2016sindy}. We aim to recover the right–hand side of
\begin{equation}
\dot{\bm{X}}(t)=\bm{f}(\bm{X}(t)), \qquad \bm{X}(t)\in\mathbb{R}^d.
\label{eq:ct_system}
\end{equation}
Collect a time history of the state and either measure $\dot{\bm{X}}(t)$ or approximate it from $\bm{X}(t)$ at times $t_1,\ldots,t_m$. Arrange the samples into
\[
X=
\begin{bmatrix}
\bm{x}^{\!\top}(t_1)\\
\bm{x}^{\!\top}(t_2)\\
\vdots\\
\bm{x}^{\!\top}(t_m)
\end{bmatrix}
=
\begin{bmatrix}
x_1(t_1) & x_2(t_1) & \cdots & x_d(t_1)\\
x_1(t_2) & x_2(t_2) & \cdots & x_d(t_2)\\
\vdots   & \vdots   & \ddots & \vdots\\
x_1(t_m) & x_2(t_m) & \cdots & x_d(t_m)
\end{bmatrix}\!\in\mathbb{R}^{m\times d},\qquad
\dot X=
\begin{bmatrix}
\dot{\bm{x}}^{\!\top}(t_1)\\
\dot{\bm{x}}^{\!\top}(t_2)\\
\vdots\\
\dot{\bm{x}}^{\!\top}(t_m)
\end{bmatrix}\!\in\mathbb{R}^{m\times d}.
\]

Next, we construct a library $\Theta(X)$ of candidate nonlinear functions of the columns of $X$. These candidate functions can be any terms that are expected to best describe the underlying dynamics of the system.
  For example,
\begin{equation}
\Theta(X)=\Big[\,\mathbf{1}\ \ \ X\ \ \ X^{P2}\ \ \ X^{P3}\ \ \ \cdots\ \ \ \sin(X)\ \ \cos(X)\ \ \cdots\,\Big]\in\mathbb{R}^{m\times p},
\label{eq:Theta_block}
\end{equation}
where higher–order polynomial blocks $X^{P2},X^{P3},\ldots$ collect all monomials of the indicated degree. Concretely,
\begin{equation}
X^{P2}=
\begin{bmatrix}
x_1^2(t_1) & x_1(t_1)x_2(t_1) & \cdots & x_d^2(t_1)\\
x_1^2(t_2) & x_1(t_2)x_2(t_2) & \cdots & x_d^2(t_2)\\
\vdots     & \vdots           & \ddots & \vdots\\
x_1^2(t_m) & x_1(t_m)x_2(t_m) & \cdots & x_d^2(t_m)
\end{bmatrix}.
\end{equation}

Once the time derivatives $\dot X$ and the library matrix $\Theta(X)$ have been computed from the data, the next step is to identify which candidate functions are active in the dynamics. We assume that only a small subset of the library terms is needed to represent the right-hand side of equation~\ref{eq:ct_system}, and we therefore seek a sparse coefficient matrix $\Xi$ such that
\begin{equation}
  \dot X \;\approx\; \Theta(X)\,\Xi, 
  \qquad
  \Xi = \big[\,\boldsymbol{\xi}_1\ \boldsymbol{\xi}_2\ \cdots\ \boldsymbol{\xi}_d\,\big] 
  \in \mathbb{R}^{p\times d},
  \label{eq:sindy_matrix}
\end{equation}
where the column vector $\boldsymbol{\xi}_k$ contains the coefficients for the $k$th state variable.

To identify these sparse coefficient vectors, we solve a separate sparse regression problem for each state. For $k = 1,\dots,d$ we consider
\begin{equation}
  \boldsymbol{\xi}_k \in 
  \arg\min_{\beta \in \mathbb{R}^p}
  \big\|\Theta(X)\,\beta - \dot X_{(:,k)}\big\|_2^2
  \quad \text{subject to $\beta$ being sparse},
  \label{eq:sindy_sparse_regression}
\end{equation}
that is, we seek a least–squares fit with as few nonzero coefficients as possible.

There are many algorithms for solving such sparse regression problems. In the original SINDy formulation of Brunton et al.\cite{brunton2016sindy}, the coefficients are computed using sequential thresholded least squares (STLSQ). Starting from the unconstrained least–squares solution of \eqref{eq:sindy_sparse_regression}, coefficients whose magnitude is below a threshold $\lambda>0$ are set to zero, defining an active set of terms. The regression is then refit on this reduced set of active columns of $\Theta(X)$, and the thresholding–refitting procedure is iterated until convergence to a sparse coefficient matrix $\Xi$. By sweeping $\lambda$ from small values (yielding dense, potentially overfit models) to larger values (yielding overly sparse, underfit models), one can identify a range of thresholds where the trade–off between sparsity and reconstruction error is balanced; models in this ``elbow'' region typically correspond to the correct governing equations.

\section{$LCT-SIND^3y$ Method}
\label{sec:lct-sindy}

We propose a data–driven framework that combines the Linear Chain Trick (LCT) with SINDy to identify systems with distributed delays from time–series data. The LCT provides a finite–dimensional ODE representation of a broad class of distributed delays, which we then fit using SINDy. This section explains the modeling assumptions, the LCT construction, and our identification pipeline.

\subsection{Distributed delays and Linear Chain Trick}
\label{Dis_delays_and}
Let $x(t)\in\mathbb{R}^d$ denote the state. A general distributed–delay system has the form
\begin{equation}
\dot{x}(t)
\;=\;
f\!\Big(
  x(t),\,
  \int_{0}^{\infty} K(s)\, x(t-s)\,ds
\Big),
\label{eq:dist-delay-general}
\end{equation}
where 
$f : \mathbb{R}^d \times \mathbb{R}^d \to \mathbb{R}^d$ 
captures the instantaneous and history–dependent dynamics, and
$K : \mathbb{R}_{\ge 0} \to \mathbb{R}$ 
is an integrable memory kernel satisfying 
$\int_{0}^{\infty}\!|K(s)|\,ds < \infty$.

For many applications one chooses a gamma (Erlang) family
\begin{equation}
K_{p,a}(s)=\frac{a^{p}\,s^{p-1}e^{-as}}{(p-1)!},\qquad s\ge 0,
\label{eq:gamma-kernel}
\end{equation}
with integer order $p\in\mathbb{N}$ and rate $a>0$. Note that
$\int_{0}^{\infty} K_{p,a}(s)\,ds=1$, so $K_{p,a}$ is a normalized kernel
with mean $\mu=p/a$ and variance $\sigma^2=p/a^2$; thus $p$ controls the
dispersion around the mean delay $\tau =\mu$.

For a discrete delay fix a target delay $\tau>0$ and consider the Erlang sequence with
\[
p=n,\qquad a=\frac{n}{\tau},\qquad n=1,2,\dots
\]
so that $\mathbb{E}[s]=p/a=\tau$ and $\operatorname{Var}(s)=p/a^{2}=\tau^{2}/n\to0$.
Hence $K_{n,n/\tau}$ concentrates its mass at $s=\tau$ as $n\to\infty$. In fact,
for any bounded continuous input $u:\mathbb{R}\to\mathbb{R}$,
\begin{equation}
\int_{0}^{\infty} K_{n,n/\tau}(s)\,u(t-s)\,ds \;\longrightarrow\; u(t-\tau)
\qquad (n\to\infty),
\label{eq:erlang-limit-discrete}
\end{equation}
i.e., the distributed delay converges to a discrete delay at $\tau$.
(See for example \cite{macdonald1989biological,smith2010linearchain}.)

The LCT replaces the distributed delay in \eqref{eq:dist-delay-general} by an exactly equivalent finite–dimensional ODE \cite{bani2017analysis,bani2015oscillatory, bani2020delay, bani2015understanding}. 
For the gamma kernel \(K_{p,a}\) with mean delay \(\tau=p/a\), the LCT asserts that 
\(z_p(t)=\int_{0}^{\infty} K_{p,a}(s)\,u(t-s)\,ds\) coincides with the last state of a \(p\)-stage first–order cascade with common rate \(a\) (i.e., a chain of length \(n=p\) corresponding to delay \(\tau\)). 
Below we recall this construction and its justification, following \cite{macdonald1989biological,smith2010linearchain}.\\


Although the existence, uniqueness, and representation of the chain have been proven in \cite[Prop.~7.3]{smith2010linearchain}, it does not address the lemma stated below. For completeness, we therefore provide its proof here. By \eqref{eq:gamma-kernel} we define \(K_{p,a}\) as the gamma kernel on \(s\ge 0\).
For \(j=1,\dots,p\) we write \(K_{j,a}\) for the member of order \(j\).

\begin{lemma}[cf.~\cite{smith2010linearchain}]
\label{lem:erlang-ivp}
Let \(K_{p,a}\) be given by \eqref{eq:gamma-kernel}. Then, for \(j=1,\dots,p\), the Erlang kernels \(K_{j,a}\) satisfy the initial–value problem
\begin{equation}
\label{eq:erlang-ivp}
\left\{
\begin{aligned}
\frac{d}{ds}K_{1,a}(s) &= -a\,K_{1,a}(s), && s>0,\\
\frac{d}{ds}K_{j,a}(s) &= a\bigl(K_{j-1,a}(s)-K_{j,a}(s)\bigr), && s>0,\quad j=2,\dots,p,\\
K_{1,a}(0) &= a,\\
K_{j,a}(0) &= 0, && j=2,\dots,p.
\end{aligned}
\right.
\end{equation}
\end{lemma}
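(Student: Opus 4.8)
The plan is to prove the lemma by direct verification, since \eqref{eq:gamma-kernel} gives each \(K_{j,a}\) in closed form and every claim in \eqref{eq:erlang-ivp} reduces to an elementary differentiation or an evaluation at \(s=0\). The strategy is to differentiate the closed-form expression for \(K_{j,a}\) using the product rule, then algebraically split the result into a term proportional to \(K_{j,a}\) and a term that I will identify as \(a\,K_{j-1,a}\). No induction is strictly needed, because the recursion holds term-by-term for each fixed \(j\); however, the computation does naturally mirror the chain structure.

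First I would dispose of the base case \(j=1\) and the initial conditions, as these are immediate. Substituting \(p=1\) into \eqref{eq:gamma-kernel} yields \(K_{1,a}(s)=a\,e^{-as}\), whose derivative is \(-a^2 e^{-as}=-a\,K_{1,a}(s)\), giving the first equation of \eqref{eq:erlang-ivp}, and evaluating at \(s=0\) gives \(K_{1,a}(0)=a\). For \(j\ge 2\), the factor \(s^{j-1}\) with \(j-1\ge 1\) forces \(K_{j,a}(0)=0\), so the remaining initial conditions hold trivially.

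Next I would establish the recursion for \(j\ge 2\). Differentiating \(K_{j,a}(s)=\dfrac{a^{j}s^{j-1}e^{-as}}{(j-1)!}\) by the product rule produces two pieces: one from differentiating \(e^{-as}\), which is exactly \(-a\,K_{j,a}(s)\), and one from differentiating \(s^{j-1}\), namely \(\dfrac{a^{j}(j-1)s^{j-2}e^{-as}}{(j-1)!}\). The key step is to simplify the factorial in this surviving term. Using \(\dfrac{j-1}{(j-1)!}=\dfrac{1}{(j-2)!}\), this piece becomes \(\dfrac{a^{j}s^{j-2}e^{-as}}{(j-2)!}=a\cdot\dfrac{a^{j-1}s^{(j-1)-1}e^{-as}}{((j-1)-1)!}=a\,K_{j-1,a}(s)\), which is precisely the lower-order kernel. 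Combining the two pieces gives \(\frac{d}{ds}K_{j,a}(s)=a\bigl(K_{j-1,a}(s)-K_{j,a}(s)\bigr)\), as required.

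Since every assertion is a direct consequence of differentiation and substitution, there is no genuine analytic obstacle here; the only point demanding care is the factorial bookkeeping in the last step, where recognizing the identity \((j-1)/(j-1)!=1/(j-2)!\) and matching the reduced expression against the definition of \(K_{j-1,a}\) is what makes the recursion close correctly.
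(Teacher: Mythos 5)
Your proof is correct and follows essentially the same approach as the paper: direct differentiation of the closed-form Erlang kernel, with the only (cosmetic) difference being that you identify the two product-rule terms as $-a\,K_{j,a}$ and $a\,K_{j-1,a}$ directly, whereas the paper computes both sides of the recursion and matches them. The factorial identity $(j-1)/(j-1)!=1/(j-2)!$ you highlight is exactly the step the paper's computation relies on as well.
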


\begin{proof}
For \(j=1\), \(K_{1,a}(s)=a e^{-a s}\) for \(s> 0\), hence
\[
\frac{d}{ds}K_{1,a}(s)=-a^2 e^{-a s}=-a\,K_{1,a}(s),
\qquad K_{1,a}(0)=a.
\]
For \(j\ge 2\) and \(s>0\),
\[
\frac{d}{ds}K_{j,a}(s)
= \frac{a^{j}}{(j-1)!}\,\frac{d}{ds}\!\big(s^{j-1}e^{-a s}\big)
= \frac{a^{j}}{(j-1)!}\Big((j-1)s^{j-2}-a s^{j-1}\Big)e^{-a s}.
\]

\[
a\big(K_{j-1,a}(s)-K_{j,a}(s)\big)
= a\left(\frac{a^{j-1}s^{j-2}e^{-a s}}{(j-2)!}-\frac{a^{j}s^{j-1}e^{-a s}}{(j-1)!}\right)
= \frac{a^{j}}{(j-1)!}\Big((j-1)s^{j-2}-a s^{j-1}\Big)e^{-a s},
\]
so \(\tfrac{d}{ds}K_{j,a}(s)=a\big(K_{j-1,a}(s)-K_{j,a}(s)\big)\) for \(s>0\).
Finally, \(K_{j,a}(0)=0\) for \(j\ge 2\). This proves \eqref{eq:erlang-ivp}.
\end{proof}

Now assume \(u:(-\infty,T)\to\mathbb{R}\) is bounded and continuous.
For \(j=1,\dots,p\), define the auxiliary states \(z_j:(-\infty,T)\to\mathbb{R}\) by
\begin{equation}
\label{eq:aux-states}
z_j(t)=\int_{0}^{\infty} K_{j,a}(s)\,u(t-s)\,ds
      =\int_{-\infty}^{t} u(r)\,K_{j,a}(t-r)\,dr .
\end{equation}

For \(j\ge 2\) (so \(K_{j,a}(0)=0\)), differentiating \eqref{eq:aux-states} yields
\begin{align}
\dot z_j(t)
  &= \frac{d}{dt}\int_{-\infty}^{t} u(\eta)\,K_{j,a}(t-\eta)\,d\eta \\
\intertext{by the Leibniz rule (justified by dominated convergence since \(u\in C_b\) and \(K_{j,a}\in L^1\))}
  &= u(t)\,K_{j,a}(0)\;+\;\int_{-\infty}^{t} u(\eta)\,\partial_t K_{j,a}(t-\eta)\,d\eta .
\label{eq:leibniz-step}
\end{align}
By Lemma~\ref{lem:erlang-ivp},
\[
\dot z_j(t)
= u(t)\,K_{j,a}(0)+a\!\int_{-\infty}^{t} u(\eta)\,[K_{j-1,a}-K_{j,a}](t-\eta)\,d\eta
= u(t)\,K_{j,a}(0)+a\,[z_{j-1}(t)-z_j(t)] .
\]
For \(j\ge2\), \(K_{j,a}(0)=0\), hence \(\dot z_j(t)=a\,[z_{j-1}(t)-z_j(t)]\).
For \(j=1\), \(K_{1,a}(0)=a\) and \(\partial_t K_{1,a}(t-\eta)=-aK_{1,a}(t-\eta)\), so
\[
\dot z_1(t)=a\,[u(t)-z_1(t)].
\]
Setting \(z_0(t):=u(t)\), we obtain the compact form
\begin{equation}
\label{eq:lct-chain-deriv}
\dot z_j(t)=a\,[z_{j-1}(t)-z_j(t)],\qquad j=1,\dots,p,
\end{equation}
on \((-\infty,T)\).

If \(T=\infty\) and \(u\in C_b(\mathbb{R})\), we have the following.

\begin{proposition}[LCT: existence–uniqueness and representation; cf.~\cite{smith2010linearchain}, Prop.~7.3]
\label{prop:lct-uniq}
Let \(u\in C_b(\mathbb{R})\) and set \(z_0(t):=u(t)\).
Then the chain \eqref{eq:lct-chain-deriv} admits a unique solution \((z_1,\dots,z_p)\) that is bounded on \(\mathbb{R}\).
Moreover, this solution is given by
\begin{equation}
\label{eq:lct-repr}
z_j(t)=\int_{0}^{\infty} K_{j,a}(s)\,u(t-s)\,ds
      =\int_{-\infty}^{t} u(\eta)\,K_{j,a}(t-\eta)\,d\eta,\qquad j=1,\dots,p.
\end{equation}
\end{proposition}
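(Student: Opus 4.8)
The plan is to separate the statement into three claims—existence of a bounded solution, uniqueness among bounded solutions, and the explicit representation—and to leverage the fact that the derivation preceding the proposition has already done most of the existence-and-representation work. First I would define $z_j$ by the convolution integral appearing in \eqref{eq:lct-repr}. The calculation culminating in \eqref{eq:lct-chain-deriv} (now carried out with $T=\infty$) shows that these functions are $C^1$ and solve the chain on all of $\mathbb{R}$, so they constitute a solution. They are moreover bounded: since $K_{j,a}\ge 0$ with $\int_0^\infty K_{j,a}(s)\,ds=1$, one gets $|z_j(t)|\le\|u\|_\infty$ for every $t$ and every $j$. Thus a bounded solution exists and equals the claimed formula, and the only thing left is to exclude any competing bounded solution.

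For uniqueness I would recast the chain as a linear inhomogeneous system $\dot{\mathbf z}=A\mathbf z+\mathbf b(t)$, where $\mathbf z=(z_1,\dots,z_p)^\top$, the forcing $\mathbf b(t)=(a\,u(t),0,\dots,0)^\top$ is bounded, and $A$ is the lower bidiagonal matrix with $-a$ on the diagonal and $a$ on the subdiagonal. The structural key is that $A=-aI+aN$ with $N$ the nilpotent subdiagonal shift, so every eigenvalue of $A$ equals $-a<0$ and $e^{At}=e^{-at}\bigl(I+atN+\cdots\bigr)$ is a matrix polynomial in $t$ times $e^{-at}$. If $\mathbf z$ and $\tilde{\mathbf z}$ are two bounded solutions, their difference $\mathbf w=\mathbf z-\tilde{\mathbf z}$ solves the homogeneous system, so $\mathbf w(t)=e^{At}\mathbf w(0)$; boundedness of $\mathbf w$ as $t\to-\infty$ forces $\mathbf w(0)=0$, since $e^{-at}\to\infty$ then dominates the polynomial factor unless the initial vector vanishes. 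Hence $\mathbf w\equiv 0$ and the bounded solution is unique, which combined with the first step yields the representation.

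A more self-contained alternative that reproves the representation directly is a stage-by-stage variation-of-constants argument. For $z_1$ the equation is $\dot z_1=a\,u(t)-a z_1$; writing $z_1(t)=e^{-a(t-t_0)}z_1(t_0)+a\int_{t_0}^{t}e^{-a(t-s)}u(s)\,ds$ and letting $t_0\to-\infty$, boundedness of $z_1$ kills the transient term (because $e^{-a(t-t_0)}=e^{-at}e^{at_0}\to 0$) and leaves the unique formula $z_1(t)=a\int_{-\infty}^{t}e^{-a(t-s)}u(s)\,ds=\int_0^\infty K_{1,a}(s)u(t-s)\,ds$. Feeding this into $\dot z_2=a\,z_1(t)-a z_2$ and iterating determines each $z_j$ uniquely from $z_{j-1}$, and the Erlang convolution identity $K_{j,a}=(a e^{-a\,\cdot})*K_{j-1,a}$ (the density of a sum of independent exponentials) collapses the iterated integrals, via Fubini, back to $\int_0^\infty K_{j,a}(s)u(t-s)\,ds$, matching \eqref{eq:lct-repr}.

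I expect the main obstacle to be conceptual rather than computational: on the two-sided line $\mathbb{R}$ there is no initial datum to invoke, so uniqueness must come entirely from the boundedness requirement, which here plays the role normally filled by an initial condition. The decisive fact is the asymptotic stability of $A$ (all eigenvalues equal to $-a$), which makes forward-time behavior decay but backward-time behavior blow up, so that ``bounded on $\mathbb{R}$'' singles out exactly one trajectory. The only technical care-points—justifying differentiation under the integral sign and the Fubini swap—are already covered by the dominated-convergence/Leibniz step preceding the proposition and by the absolute convergence of the defining integrals, which is immediate from $K_{j,a}\in L^1$ and $u\in C_b(\mathbb{R})$.
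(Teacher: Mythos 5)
Your proposal is correct, but it does not follow the paper's route for the simple reason that the paper gives no proof at all: after deriving in the preceding text that the convolution states $z_j(t)=\int_0^\infty K_{j,a}(s)u(t-s)\,ds$ are $C^1$, bounded by $\|u\|_\infty$, and satisfy the chain \eqref{eq:lct-chain-deriv}, the authors simply cite \cite[Prop.~7.3]{smith2010linearchain} for the proposition. You correctly recognize that the existence-and-representation half is already done by that derivation (together with the trivial bound $|z_j(t)|\le\|u\|_\infty$ from $K_{j,a}\ge0$, $\int_0^\infty K_{j,a}=1$), and the genuinely new content you supply is the uniqueness half, which the paper outsources. Your main uniqueness argument is sound: the difference of two bounded solutions satisfies $\dot{\mathbf w}=A\mathbf w$ with $A=-aI+aN$, $N$ nilpotent, so $\mathbf w(t)=e^{-at}p(t)$ with $p$ a vector polynomial satisfying $p(0)=\mathbf w(0)$; if $\mathbf w(0)\ne0$ then $p\not\equiv0$ and $e^{-at}\|p(t)\|\to\infty$ as $t\to-\infty$, contradicting boundedness. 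This is exactly the right observation that on the two-sided line boundedness replaces the missing initial condition. Your alternative stage-by-stage variation-of-constants argument is also valid and has the advantage of simultaneously re-deriving the representation via the semigroup identity $K_{j,a}=(a e^{-a\,\cdot})*K_{j-1,a}$, making the whole proposition self-contained without appeal to the preceding Leibniz computation. Either version would serve as a complete replacement for the citation; no gaps.
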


\noindent\emph{Proof.} See \cite[Prop.~7.3]{smith2010linearchain}.

\subsection{Methodological Construction}
\label{Meth_cons}

In the previous section, we established that a distributed delay can be theoretically represented using the linear chain trick. In this section, we demonstrate how the linear chain trick is implemented in conjunction with the SINDy framework. 

Let $x(t) = (x_1(t),\ldots,x_d(t))^\top \in \mathbb{R}^d$ denote the state.
For each component $i$ we introduce an LCT chain of order $p_i$ and rate
$a_i$ driven by $u_i(t)=x_i(t)$. Let $z^{(i)}_{p_i}(t)$ denote the last
state of the $i$th chain, and collect these terminal states into the vector
\[
z_p(t) := \big(z^{(1)}_{p_1}(t),\ldots,z^{(d)}_{p_d}(t)\big)^\top \in \mathbb{R}^d.
\]
We augment the SINDy library for the $x$–dynamics with the features
$\{z^{(i)}_{p_i}(t)\}_{i=1}^d$. Since only the last chain states enter the
right-hand side, SINDy identifies models of the form
\begin{equation}
\dot x(t) = f\big(x(t),z_p(t)\big),
\end{equation}
where sparsity selects the relevant delayed channels.

Given time series measurements $\{t_i,x(t_i)\}_{i=1}^{N}$, we first denoise the trajectories and estimate time derivatives. For the denoising step we usethe Savitzky--Golay filter~\cite{savitzky1964smoothing}, implemented in \textsc{Matlab} as \texttt{sgolayfilt} with The filter window is chosen in a problem--dependent manner and scaled with the sampling
interval to span an appropriate physical time horizon (approximately $30$ time units in the Hes1--mRNA example) and the polynomial degree is taken up to $3$, reduced if necessary when the window is too short. After smoothing each state component, time derivatives $\dot{x}(t_i)$ are
computed using a standard finite-difference gradient.

We define a finite grid of candidate distributed delays $\{(\tau_m,p_m)\}$, where $\tau_m$ is the target mean delay and $p_m$ is the length of the corresponding linear chain. For each candidate pair $(\tau_m,p_m)$ we construct the associated linear–chain ODEs (using $a = p_m/\tau_m$) and
drive them with the smoothed time–series data $x(t)$. All chains are initialized consistently from the first observed state $x(t_1)$, and we
integrate them on the data time grid to obtain, for each state variable, a terminal chain state $z_p(t)$ that serves as a surrogate for the distributed delay term. These candidate delayed states are then appended to the SINDy library, and sparse regression selects only a small subset of delayed channels to appear in the learned model.

We estimate the coefficient matrix $\Xi$ in
\begin{equation}
\dot{X} = \Theta\big(X, Z_{\text{last}}\big)\,\Xi,
\label{eq:sparse-model}
\end{equation}
using sequentially thresholded ridge regression (STRidge)~\cite{rudy2017datadriven}.
Let $\Theta = \Theta\!\big(x(t_i), Z_{\text{last}}(t_i)\big)$ denote the library
and $\dot{X}$ the stacked targets, and write
$\Xi = [\,\boldsymbol{\xi}_1,\ldots,\boldsymbol{\xi}_d\,]$ with one coefficient
vector per state component. After column-wise normalization of $\Theta$, STRidge
is applied independently to each component $j=1,\ldots,d$ by iterating
\[
\boldsymbol{\xi}_j \leftarrow
\arg\min_{\boldsymbol{\xi}}\,
\big\|\dot{X}_{(:,j)} - \Theta\,\boldsymbol{\xi}\big\|_2^2
\;+\; \lambda\,\|\boldsymbol{\xi}\|_2^2,
\qquad
(\boldsymbol{\xi}_j)_k \leftarrow 0 \ \text{if}\ \big|(\boldsymbol{\xi}_j)_k\big|<\lambda,
\]
followed by refitting the ridge regression on the active support. We repeat
this ridge–threshold–refit cycle until the support stabilizes for each
$\boldsymbol{\xi}_j$.

For each candidate pair $(\{\tau_m,p_m\})$, the data are first split into training and validation intervals. Model identification is performed using the training data, after which the identified ODE is simulated forward in time.
Its trajectory is then compared to the validation data on a fixed scoring grid, and the mean squared error is computed as
\[
\mathrm{MSE} \;=\; \frac{1}{N d}\sum_{i=1}^{N}
\big\|x_{\mathrm{id}}(t_i)-x(t_i)\big\|^2 .
\]
Model selection is subsequently carried out using the Bayesian Information Criterion. ~\cite{kass1995bayes}
\[
\mathrm{BIC} \;=\; N\ln(\mathrm{MSE}) + k\ln N \;+\; \alpha \sum_{m=1}^{M} p_m,
\]
where $k$ denotes the number of active (nonzero) coefficients in $\Xi$, $d$ is the number of observed state components, and $\alpha>0$ is a mild penalty on the total chain length, introduced to discourage unnecessarily long linear-chain representations.

\subsection{Stability of the Reconstructed Delayed State}
\label{subsec:LCT-stability}

The terminal state of the LCT chain depends on the input trajectory through a convolution with the Erlang kernel. Consequently, the accuracy of the reconstructed delayed state $\tilde z_p(t)$ depends on how well the smoothed
and interpolated data $\tilde u(t)$ approximate the true continuous signal $u(t)$. Since the Erlang kernel is nonnegative and integrates to one, the LCT operator defines a non-expansive linear filter in the $L^\infty$ norm: perturbations in the input propagate to the delayed state without amplification. This implies that the LCT reconstruction is stable with respect to measurement noise, sampling errors, and smoothing artifacts. The following proposition formalizes this stability property. 

\begin{proposition}[Stability of the LCT delayed state to input perturbations]
\label{p1}
Let $u, \tilde u \in C_b(\mathbb{R})$ be two bounded continuous input signals and let $K_{p,a}$ be the Erlang kernel as defined in~\eqref{eq:gamma-kernel}. Define the corresponding terminal LCT states (cf.\ Proposition~\ref{prop:lct-uniq}) by
\[
z_p(t) = \int_0^\infty K_{p,a}(s)\,u(t-s)\,ds, \qquad 
\tilde z_p(t) = \int_0^\infty K_{p,a}(s)\,\tilde u(t-s)\,ds.
\]
Then for every $T>0$,
\[
\| \tilde z_p - z_p \|_{L^\infty(0,T)} \;\le\;
\| \tilde u - u \|_{L^\infty(-\infty,T)}.
\]
\end{proposition}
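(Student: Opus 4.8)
The plan is to exploit the three structural properties of the Erlang kernel already recorded in the excerpt: linearity of the convolution in the input, nonnegativity $K_{p,a}(s)\ge 0$, and the normalization $\int_0^\infty K_{p,a}(s)\,ds = 1$. First I would subtract the two defining integrals and use linearity to write the difference of terminal states as a single convolution against the same kernel,
\[
\tilde z_p(t)-z_p(t)=\int_0^\infty K_{p,a}(s)\,\bigl[\tilde u(t-s)-u(t-s)\bigr]\,ds .
\]
Because $u,\tilde u$ are bounded and $K_{p,a}\in L^1(0,\infty)$, the integrand is absolutely integrable, so this manipulation is legitimate and no Fubini/dominated-convergence machinery is required for the inequality itself.

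Next I would move absolute values inside the integral. This step is clean precisely because the kernel is nonnegative: $\bigl|K_{p,a}(s)\,[\tilde u(t-s)-u(t-s)]\bigr| = K_{p,a}(s)\,|\tilde u(t-s)-u(t-s)|$, so the triangle inequality for integrals gives
\[
|\tilde z_p(t)-z_p(t)|\le \int_0^\infty K_{p,a}(s)\,|\tilde u(t-s)-u(t-s)|\,ds .
\]
The crux of the argument is then a pointwise bound on the integrand that is uniform in $s$. The key observation is the correct range of the shifted argument: for $t\in(0,T)$ and $s\ge 0$ one has $t-s\in(-\infty,t]\subseteq(-\infty,T)$, so $|\tilde u(t-s)-u(t-s)|\le \|\tilde u-u\|_{L^\infty(-\infty,T)}$ for every $s$. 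This is exactly why the right-hand side of the claimed bound is measured over the half-line $(-\infty,T)$ rather than over the finite window $(0,T)$: the backward convolution reaches arbitrarily far into the history.

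Finally I would pull the constant $\|\tilde u-u\|_{L^\infty(-\infty,T)}$ out of the integral and invoke $\int_0^\infty K_{p,a}(s)\,ds=1$ to collapse the remaining integral to $1$, yielding $|\tilde z_p(t)-z_p(t)|\le \|\tilde u-u\|_{L^\infty(-\infty,T)}$ for all $t\in(0,T)$; taking the supremum over $t$ gives the stated estimate. I do not expect a genuine obstacle here, since the inequality is a direct consequence of $K_{p,a}$ defining a Markov (probability) kernel, hence a non-expansive operator in the sup norm. The only point demanding care is the bookkeeping of the argument domain described above, and the mild verification that the integrals converge; everything else is routine.
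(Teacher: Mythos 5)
Your proposal is correct and follows essentially the same route as the paper's proof: subtract the two convolutions, use nonnegativity of the kernel to move absolute values inside, bound the integrand uniformly by $\|\tilde u-u\|_{L^\infty(-\infty,T)}$ since $t-s\le T$, and invoke the normalization $\int_0^\infty K_{p,a}(s)\,ds=1$ before taking the supremum over $t$. The only difference is expository — your remark that the bound reflects the non-expansiveness of a probability kernel in the sup norm is a nice framing the paper states only in the surrounding discussion.
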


\begin{proof}
From the convolution representation of the LCT chain
(cf.\ Proposition~\ref{prop:lct-uniq}), we have
\[
z_p(t) = \int_0^\infty K_{p,a}(s)\,u(t-s)\,ds,\qquad
\tilde z_p(t) = \int_0^\infty K_{p,a}(s)\,\tilde u(t-s)\,ds.
\]
Subtracting these expressions gives
\[
\tilde z_p(t) - z_p(t) = \int_0^\infty K_{p,a}(s)\,\bigl(\tilde u(t-s) - u(t-s)\bigr)\,ds.
\]
Using $K_{p,a}(s) \ge 0$ and the triangle inequality, we obtain
\[
\bigl| \tilde z_p(t) - z_p(t) \bigr|
\le
\int_0^\infty K_{p,a}(s)\,
\bigl| \tilde u(t-s) - u(t-s) \bigr|\,ds.
\]
For any $t \in (0,T)$ and $s \ge 0$, we have $t-s \le T$, hence
\[
\bigl| \tilde u(t-s) - u(t-s) \bigr|
\le
\sup_{r \le T}
\bigl| \tilde u(r) - u(r) \bigr|
=
\| \tilde u - u \|_{L^\infty(-\infty,T)}.
\]
Therefore,
\[
\bigl| \tilde z_p(t) - z_p(t) \bigr|
\le
\| \tilde u - u \|_{L^\infty(-\infty,T)}
\int_0^\infty K_{p,a}(s)\,ds.
\]
Since the Erlang kernel integrates to $1$, we obtain
\[
\bigl| \tilde z_p(t) - z_p(t) \bigr|
\le
\| \tilde u - u \|_{L^\infty(-\infty,T)}
\qquad
\forall t \in (0,T).
\]
Taking the supremum over $t \in (0,T)$ yields the desired bound,
\[
\| \tilde z_p - z_p \|_{L^\infty(0,T)}
\le
\| \tilde u - u \|_{L^\infty(-\infty,T)}.
\]
\end{proof}

Proposition~\ref{p1} provides a rigorous guarantee that the LCT reconstruction of the delayed state does not amplify input errors: the worst-case error in $\tilde z_p(t)$ on $[0,T]$ is bounded by the worst-case error in the input signal on $(-\infty,T]$. In practice, experimental measurements of $u(t)$ are noisy and sparsely sampled, so the denoised and interpolated signal
$\tilde u(t)$ only approximates the true state. Since the LCT chain is built from $\tilde u(t)$, it is therefore essential that the terminal state $\tilde z_p(t)$ remains stable with respect to these perturbations. In the next section, we illustrate this behavior numerically and contrast it with numerical differentiation, which is well known to be unstable and to amplify noise in the data.







\section{Numerical Analysis}
\label{sec4}
\subsection{Performance Comparison}
\label{sec4.1}
We present numerical evidence that our $LCT-SIND^3y$ identification framework is more general than discrete-delay SINDy by directly comparing the two approaches. The discrete-delay SINDy baseline follows the same overall procedure as in the previous section, except that the library is augmented with an explicit discrete-lag feature (e.g., $X(t-\tau)$) rather than with the final state of the auxiliary LCT chain, which represents the distributed-delay effect. The $LCT-SIND^3y$ framework searches over an Erlang delay kernel with mean $\tau$ and order $p$ while learning the right–hand–side terms; since the kernel variance is $\tau^{2}/p$, large $p$ produces a narrow, almost point–mass memory, whereas smaller $p$ yields a broader, distributed memory~\cite{smith2010linearchain}. When the data are consistent with an effectively discrete delay the procedure selects a large but finite $p$ that well approximates the discrete case; when a broader memory is needed it selects a smaller $p$, indicating a genuinely distributed delay. When the experimental data arise from a broadly distributed delay, any discrete–delay identification method will inevitably misestimate the delay~$\tau$ and misrepresent the governing dynamics, owing to the structural mismatch between the true and assumed memory kernels. To illustrate these points we use a Hes1–mRNA feedback model and compare the two approaches on synthetic data. In the distributed–delay formulation, transcription is repressed by a smoothed (past–averaged) protein level,
\begin{equation}
\begin{aligned}
\dot{M}(t) &= \frac{\alpha_m}{1 + \left( \dfrac{\displaystyle \int_{0}^{\infty} K(s)\, P(t-s)\, ds}{P_0} \right)^{n}} - M_m\, M(t), \\[6pt]
\dot{P}(t) &= \alpha_p\, M(t) - M_p\, P(t),
\end{aligned}
\label{eq:hes1_distributed}
\end{equation}
where $K(s)$ is a delay kernel, $P_0$ is the Hill threshold and $n$ the Hill exponent \cite{Rateitschak2007IntracellularDelay,Alanazi2025Hes1mRNA}. Hes1 ( P(t) ) is a basic helix–loop–helix transcriptional repressor that down–regulates its own expression, while the Hes1 mRNA ( M(t) ) carries the gene’s information to the cytoplasm for translation into protein \cite{Hirata2002Hes1Oscillation}. Because transcription, translation and intracellular transport take finite time, the feedback is naturally delayed. In our experiments we generate time series from the distributed–delay model above and then identify dynamics using SINDy with a library that includes instantaneous polynomial terms and a lagged Hill feature.We compare two approaches: (i) a conventional SINDy formulation that includes only a discrete delay in its library and searches over the delay value~$\tau$, and (ii) our $LCT-SIND^3y$ framework, which performs a joint search over $(\tau, p)$. The results show that the latter accurately recovers the correct delay profile when the data are distributed, while it naturally selects a large~$p$ when the underlying dynamics are discrete-like. We use \textsc{Matlab} to solve system~\eqref{eq:hes1_distributed} and generate simulated data sampled at $\Delta t = 0.02$. The parameter values are given in Table~\ref{tab:parameters}.

\begin{table}
\centering
\caption{Range and baseline values of Model parameters used in the simulations.}
\vspace{-.3cm}
\begin{tabular}{clcc}
\hline
\textbf{Symbol} & \textbf{Description} & \textbf{Baseline Value (Range)} & \textbf{Refs.} \\
\hline
$\mu_m$ & Rate of mRNA degradation & 0.03 (0.026, 0.03) & \cite{Hirata2002Hes1Oscillation} \\
$\mu_H$ & Rate of protein degradation & 0.03 (0.027, 0.036)& \cite{Hirata2002Hes1Oscillation} \\
$\tau$  & Time delay & 10 - 20 & \cite{monk2003oscillatory} \\
$P_0$   & Repression threshold & 100 (10, 100)& \cite{monk2003oscillatory} \\
$n$     & Hill coefficient & 5 (2, 10)& \cite{monk2003oscillatory} \\
$\alpha_H$ & Hes1 protein production rate & 2 & \cite{sturrock2011spatio} \\
$\alpha_m$ & mRNA production rate & 1 & \cite{sturrock2011spatio} \\
\hline
\end{tabular}
\label{tab:parameters}
\end{table}

We then apply discrete--delay SINDy to identify a delay model from this data. When the true dynamics have a broadly distributed delay (small~$p$), the algorithm fails to recover the correct system: it underfits if we restrict the candidate library to linear terms (Figure~\ref{fig:discreteSINDyH}), and it overfits if we allow a third--order polynomial library.
 
\begin{figure}
\centering
\includegraphics[width=1.00\textwidth]{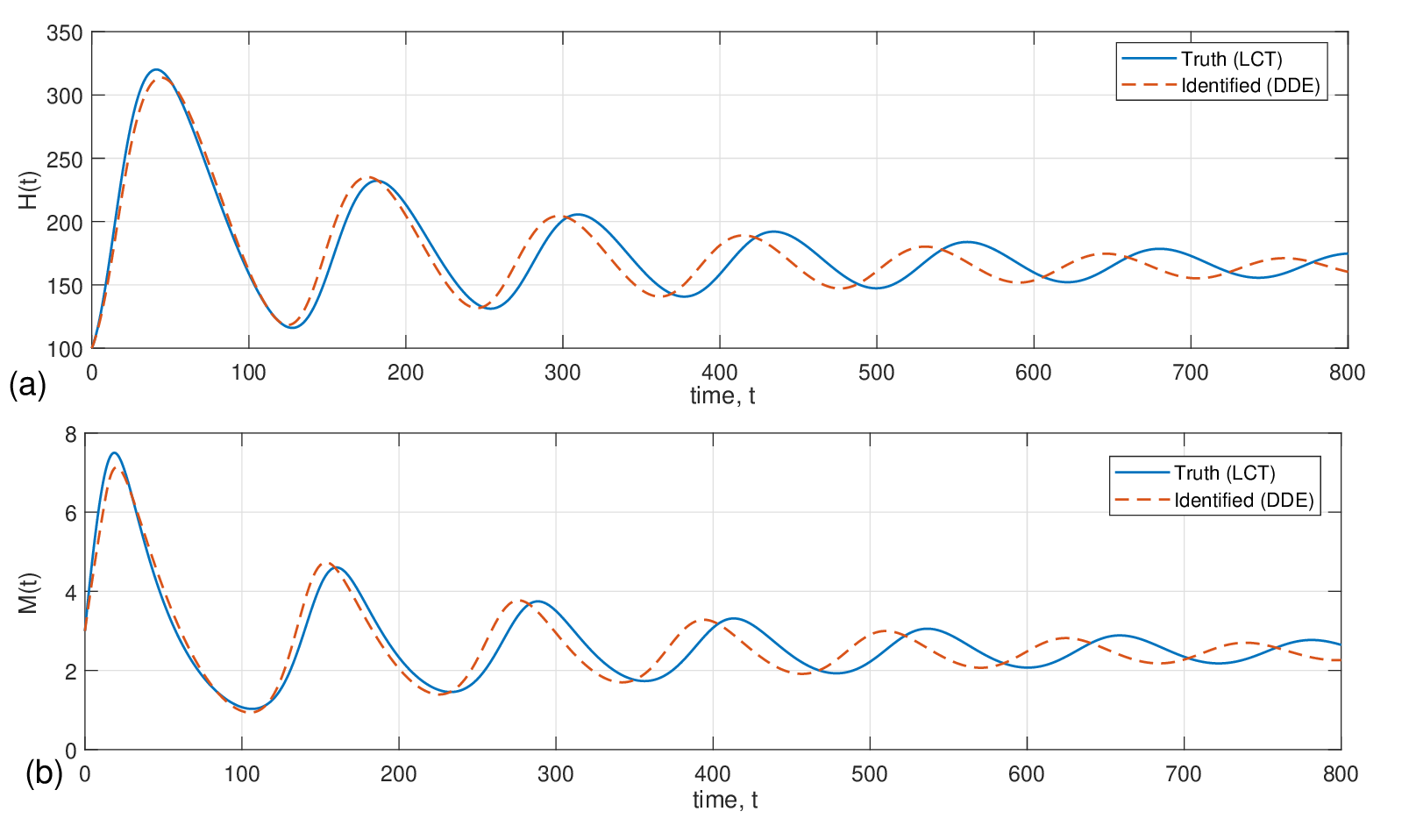} 

\caption{Discrete-delay SINDy identification results. (a) simulation of Hes1 protein. (b) simulation of mRNA. The parameters used to generate the synthetic data are given in Table~\ref{tab:parameters}. We use a broadly distributed delay with $p = 2$.}
\label{fig:discreteSINDyH}
\end{figure}

\begin{table}[h]
\caption{Identified model terms using discrete-delay SINDy. The ground-truth model has $p = 2$ and $\tau = 20$, whereas the identified discrete delay is $\tau^\ast = 13$.}
\label{tab:identified_terms}
\vspace{-.3cm}
\begin{tabular}{lccccccc}
\toprule
\textbf{Eq.} 
    & $1$ 
    & $M$ 
    & $P$ 
    & $\mathrm{Hill}(P_{\tau})$
    & $\mathrm{RSS}$ 
    & $\mathrm{Rel.\ Error}$
    & $\mathrm{BIC}$ \\
\midrule
$\dot{M}$ 
    & $0.019014$
    & $-0.012943$
    & $-0.000243$
    & $0.662368$
    & $6.81\times 10^{3}$
    & $1.41\times 10^{-1}$
    & $-7.08\times 10^{4}$ \\
$\dot{P}$
    & $0$
    & $1.999964$
    & $-0.029999$
    & $0$
    & $6.96\times 10^{6}$
    & $7.27\times 10^{-2}$
    & $2.06\times 10^{5}$ \\
\bottomrule
\end{tabular}

\begin{minipage}{0.95\linewidth}
\footnotesize
\vspace{.3cm}
Total model fit: BIC = $1.36 \times 10^{5}$, 
RSS = $6.96\times 10^{6}$.
\end{minipage}
\end{table}

From Table~\ref{tab:identified_terms}, we observe that the discovered equation for Hes1 protein is correct, whereas the equation identified for mRNA is not. The identified model also underestimates the delay, inferring a delay of $13$, while the true delay is $20$ This discrepancy arises because the candidate library includes only discrete delays, while the data were generated from a system with a distributed delay. In this case, the polynomial terms in the library were restricted to be linear, which led to underfitting. We allow the library to include terms up to third order, which results in a very close fit to the data but produces identified equations that differ from the true system and contain biologically implausible high-order nonlinearities, indicating overfitting. The discrete SINDy method can recover the correct model only when $p$ is large.

\begin{figure}
\centering
\includegraphics[width=0.8\textwidth]{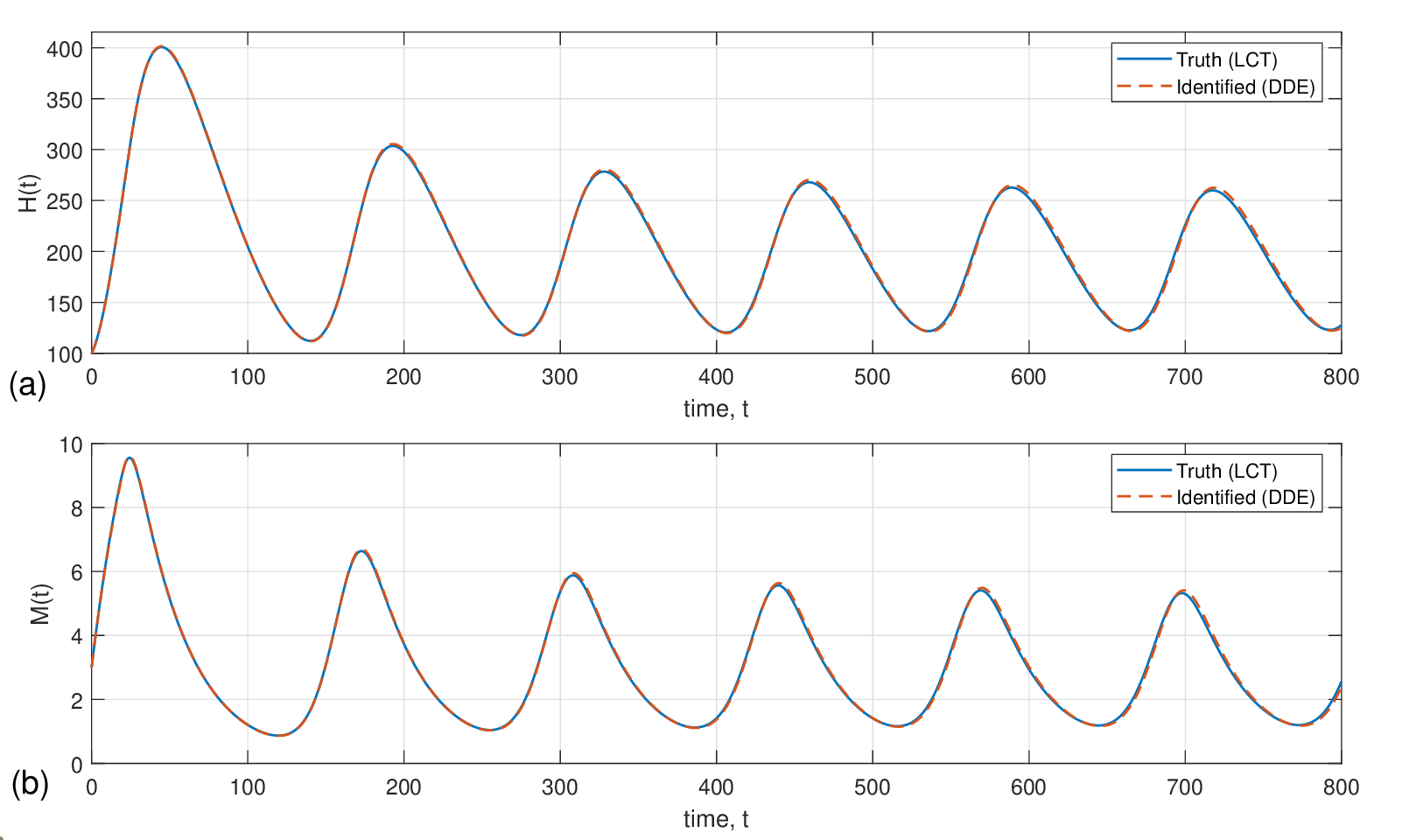} 
\caption{Discrete-delay SINDy identification results. (a) simulation of Hes1 protein. (b) simulation of mRNA. We use the same parameter values as in Figure~\ref{fig:discreteSINDyH}, except with $p = 100$.}
\label{fig:discreteSINDy2}

\end{figure}

\begin{table}[h]
\caption{Identified model terms using discrete-delay SINDy.  The ground-truth model has $p=100$ and $\tau=20$, and the identified discrete delay is $\tau^\ast=20$.}
\label{tab:identified_terms_p100}
\vspace{-0.3cm}
\begin{tabular}{lccccccc}
\toprule
\textbf{Eq.} 
    & $1$ 
    & $M$ 
    & $P$ 
    & $\mathrm{Hill}(P_{\tau})$
    & $\mathrm{RSS}$ 
    & Rel. error
    & $\mathrm{BIC}$ \\
\midrule
$\dot{M}$ 
    & $0$
    & $-0.030082$
    & $0$
    & $0.992949$
    & $1.45 \times 10^{2}$ 
    & $1.662 \times 10^{-2}$
    & $-2.25 \times 10^{5}$ \\
$\dot{P}$
    & $0$
    & $1.999980$
    & $-0.030000$
    & $0$
    & $1.62 \times 10^{5}$ 
    & $9.33 \times 10^{-3}$
    & $5.6 \times 10^{4}$\\
\bottomrule
\end{tabular}
\begin{minipage}{0.95\linewidth}
\footnotesize
\vspace{.3cm}
Total model fit: BIC = $-1.69 \times 10^{5}$, RSS = $1.62 \times 10^{5}$.
\end{minipage}
\end{table}

\begin{figure}
\centering

\includegraphics[width=1.00\textwidth]{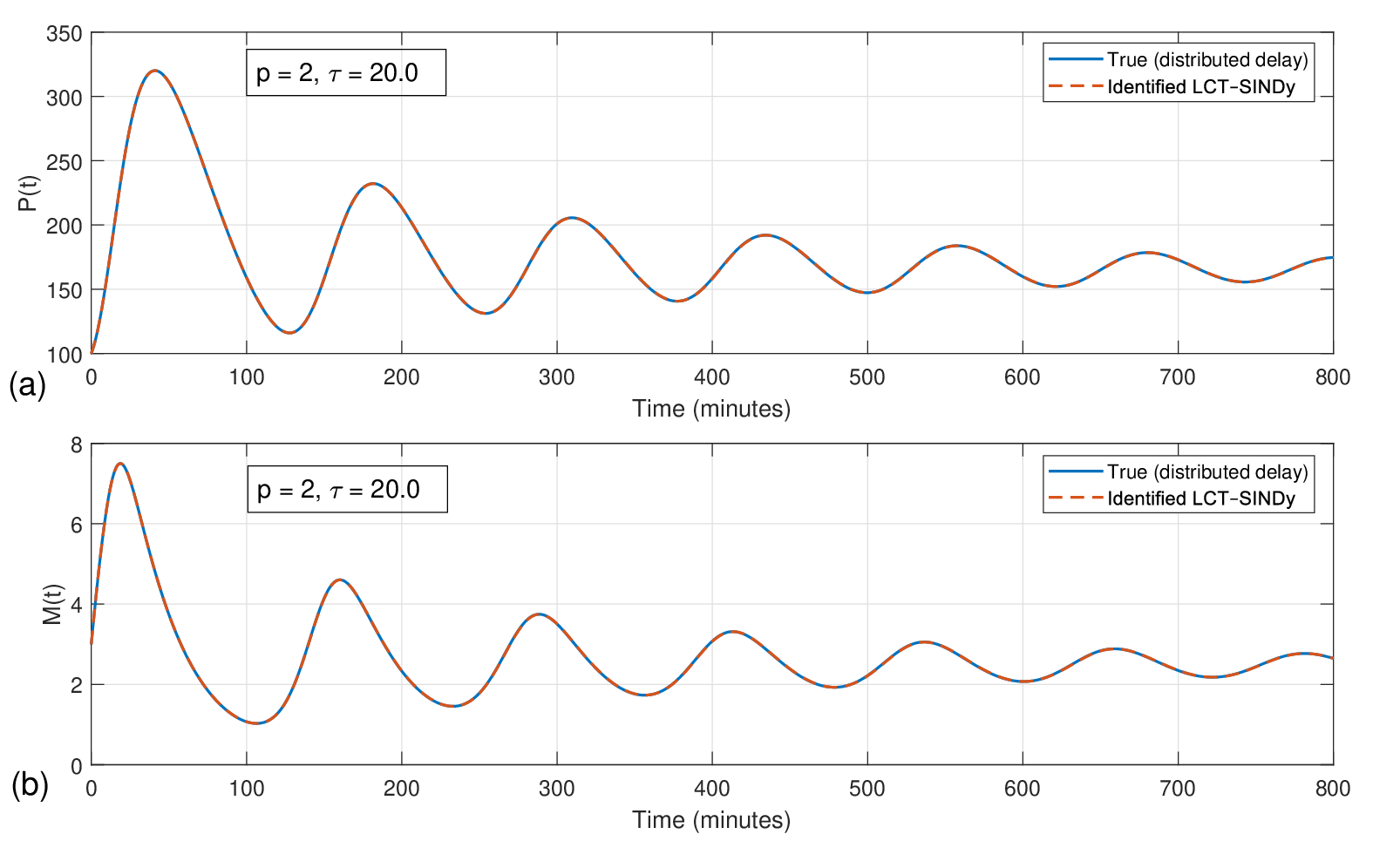}

\vspace{0.3cm}

\includegraphics[width=1.00\textwidth]{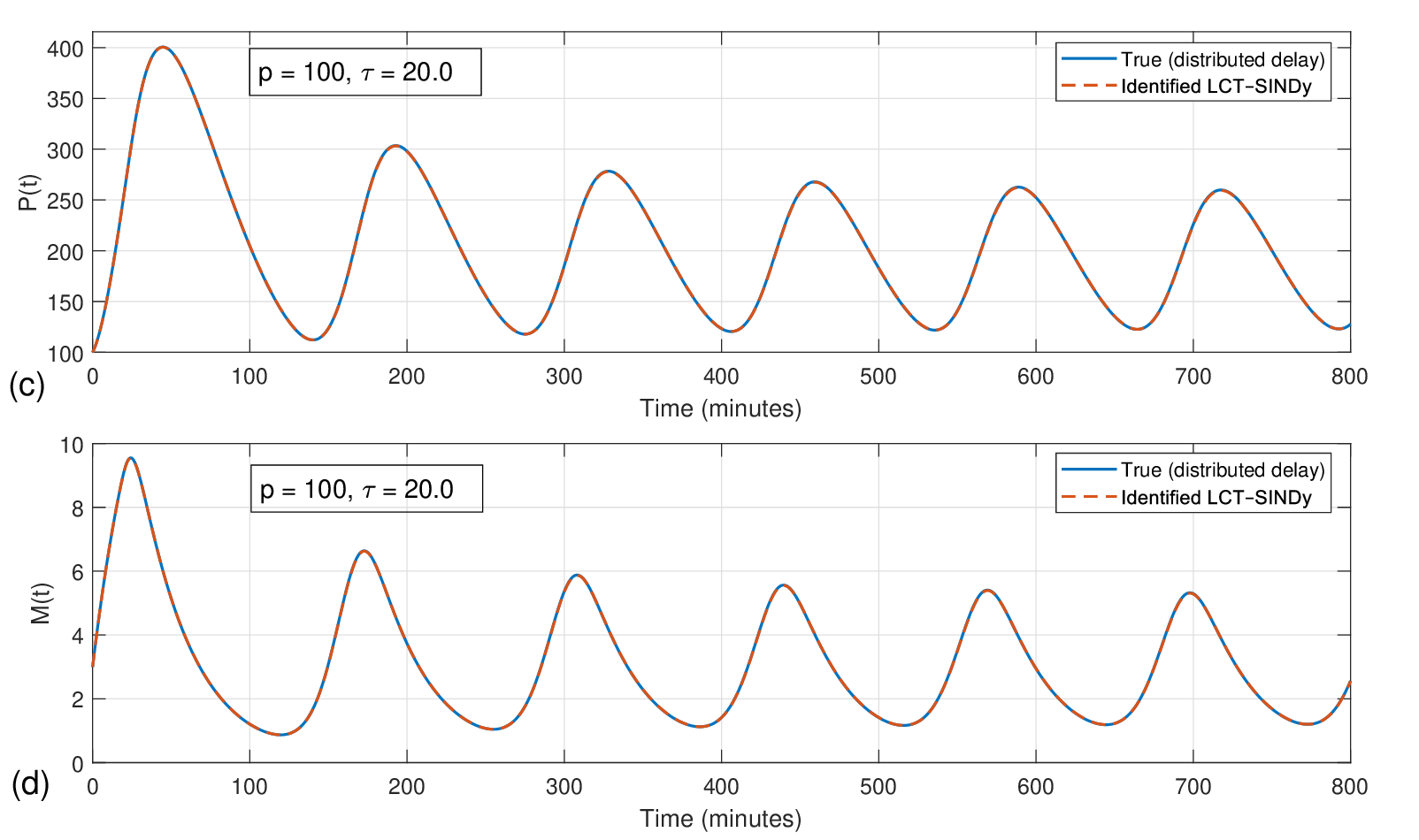}

\caption{$LCT-SIND^3y$ identification results for distributed-delay Hes1--mRNA dynamics.
Panels (a) and (b) correspond to a broadly distributed delay with $p = 2$,
while panels (c) and (d) correspond to a sharply distributed delay with $p = 100$.
Synthetic data are generated using the parameters listed in Table~\ref{tab:parameters}.}
\label{fig:LCT_SINDy_comparison}
\end{figure}

As expected, the discrete-delay SINDy method can identify the correct model (see Figure ~\ref{fig:discreteSINDy2}) only when the distributed delay is narrow, making the system effectively close to the discrete-delay case. However, when the delay is broadly distributed, the method fails to identify the correct model, as shown in Figure~\ref{fig:discreteSINDyH}. In Figure~\ref{fig:LCT_SINDy_comparison}, we apply $LCT-SIND^3y$ to two cases with true $p = 2$ and true $p = 100$. In both cases, the method successfully identifies the correct governing model, including the true delay $\tau$ and the nonlinear parameters $K$ and $n$, by selecting the best values from the candidate grid using the BIC.

\begin{table}[ht]
\centering
\caption{Identified model terms using $LCT-SIND^3y$. All polynomial terms of order $2 - 4$ were identified as zero and omitted for clarity. The ground-truth model has $p=2$ and $\tau=20$, and the identified mean delay is $\tau^\ast=20$ and $p^\ast=2$.}
\label{tab:LCT_SINDy_terms}
\vspace{-0.3cm}
\begin{tabular}{lccccccc}
\toprule
\textbf{Eq.} 
    & $1$ 
    & $M$ 
    & $P$ 
    & $\mathrm{Hill}(z_n)$
    & $\mathrm{RSS}$ 
    & $\mathrm{Rel.\ Error}$ 
    & $\mathrm{BIC}$ \\
\midrule
$\dot{M}$ 
    & $0$
    & $-0.029999$
    & $0$
    & $0.99997$
    & $1.61 \times 10^{-2}$
    & $1.537 \times 10^{-3}$
    & $-8.648 \times 10^{3}$ \\
$\dot{P}$
    & $0$
    & $1.9999$
    & $-0.029999$
    & $0$
    & $2.027 \times 10$
    & $8.765 \times 10^{-4}$
    & $-2.931 \times 10^{3}$ \\
\bottomrule
\end{tabular}

\begin{minipage}{0.95\linewidth}
\footnotesize
\vspace{.3cm}
Total model fit: BIC = $-1.1580 \times 10^{4}$, 
RSS = $2.028\times 10$.
\end{minipage}
\end{table}

\subsection{Effects of Sampling and Noise}
\label{sec4.2}
In the previous subsection, we demonstrated the importance of including 
distributed delays in the SINDy library, rather than relying solely on 
discrete delays, using clean, noise-free data. Although such data are not 
representative of experimental measurements, they allowed us to clearly isolate 
and illustrate the key point: when the true delayed influence is broadly 
distributed, a discrete-delay library is structurally insufficient, whereas an 
LCT-based distributed-delay library enables correct identification. In this subsection, we numerically investigate the effects of sampling and noise on LCT reconstruction.

Section~(\ref{sec:lct-sindy}) established that the Linear Chain Trick ODE system is mathematically equivalent to a distributed-delay 
formulation.Traditionally, LCT is introduced to numerically solve a known 
distributed-delay differential equation by converting the integral delay term into 
a system of coupled ODEs. In contrast, within the present SINDy framework, we 
assume that only the time-series data are available, and we employ the LCT as a data-driven mechanism to approximate the distributed delay. In this setting, 
the chain equations are solved using initial conditions derived directly from the 
observed data, while SINDy is responsible for identifying the remaining governing 
equations that best describe the system dynamics. Because the LCT chain requires a continuous input signal, the measured data, 
which are typically discrete, unevenly sampled, and affected by measurement noise, 
are first smoothed using a Savitzky–Golay (SG) filter with a third-order polynomial 
and an appropriate window length consistent with the sampling rate. The smoothed data points are then interpolated using a 
shape-preserving cubic interpolation (\texttt{'pchip'}) to construct a continuous 
function that drives the chain dynamics. This combination of smoothing and 
interpolation ensures that the ODE solver can evaluate the input at arbitrary time 
points during the integration of the LCT system while maintaining both continuity 
and robustness against measurement noise. However, when the input data are noisy or sparsely sampled, the accuracy of the 
reconstructed delayed state produced by the LCT depends critically on the quality 
of both the interpolation and the data itself. It is therefore essential to quantify 
how sampling sparsity and measurement noise affect the LCT reconstruction.

In this subsection, we investigate the robustness of the last chain state, $z_p$ which represents the distributed-delay contribution to degraded data quality. We compare the reconstructed $z_p^{\text{id}}$ with the ground-truth 
$z_p^{\text{true}}$ across a range of sampling intervals
\[
\Delta t \in \{1, 2, 3, 5, 8, 12, 15, 20\} \ \text{minutes},
\]
and additive Gaussian white noise levels
\[
\eta \in \{0.1, 0.2, \ldots, 1.0\}.
\]
For each configuration, we compute the relative error of the LCT output and, for 
comparison, the relative error in the numerical derivatives obtained from the same 
noisy and sparsely sampled data (see Fig.~\ref{fig:noise_and_sample}).

\begin{figure}[htbp]
    \centering
            \includegraphics[width=0.7\textwidth]{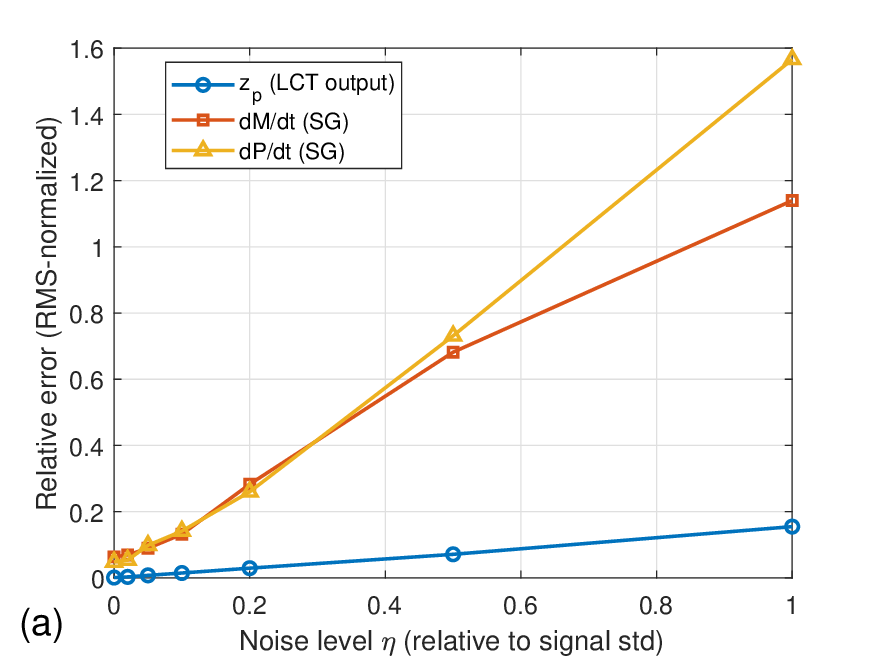}
               \label{fig:noiseFig}
          \includegraphics[width=.7\textwidth]{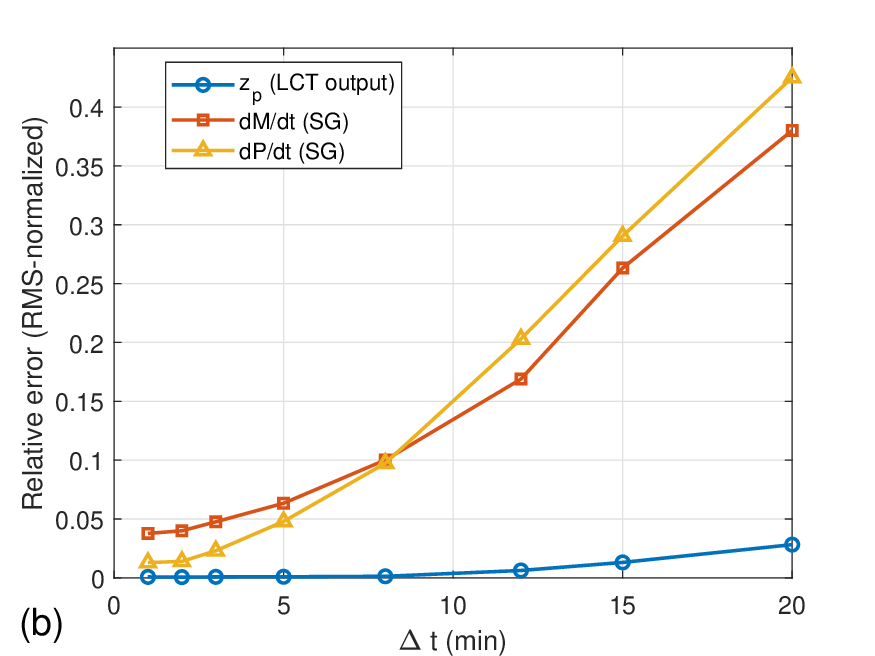}
                \label{fig:sampleFig}
    \caption{Comparison of LCT robustness. (a) Reconstruction stability vs noise ($\Delta t = 5$ minutes).  Relative error of the identified $z_p$ (blue) computed from data sampled every 5 minutes, plotted against noise level, along with derivative errors of $P$ (red) and $M$ (yellow). (b) Reconstruction stability vs sampling (noise = 0.00). Relative error of the identified $z_p$ obtained from noise-free data with varying sampling step sizes.}
    \label{fig:noise_and_sample}
\end{figure}

\begin{figure*}
  \centering
    \centering
    \includegraphics[width=0.48\linewidth]{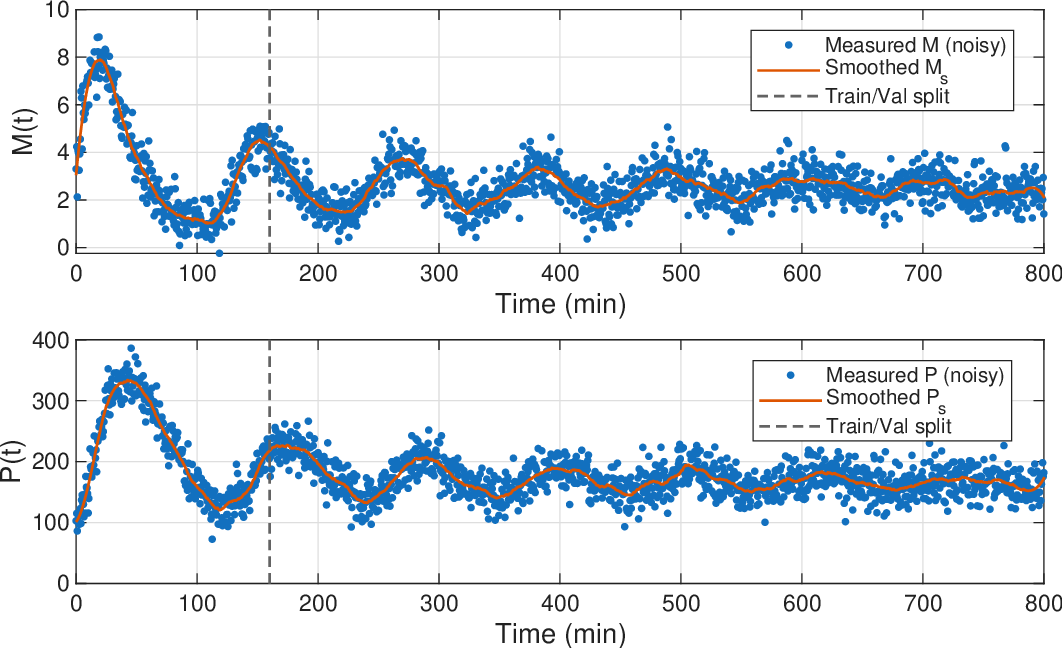} 
    \includegraphics[width=0.48\linewidth]{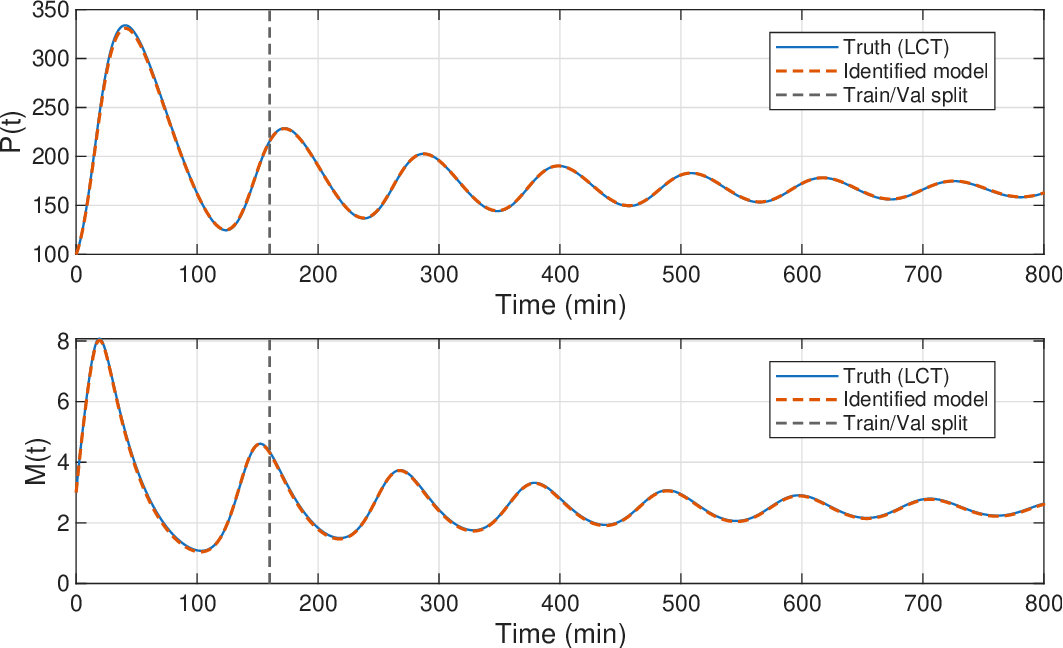}\\
    \includegraphics[width=0.48\linewidth]{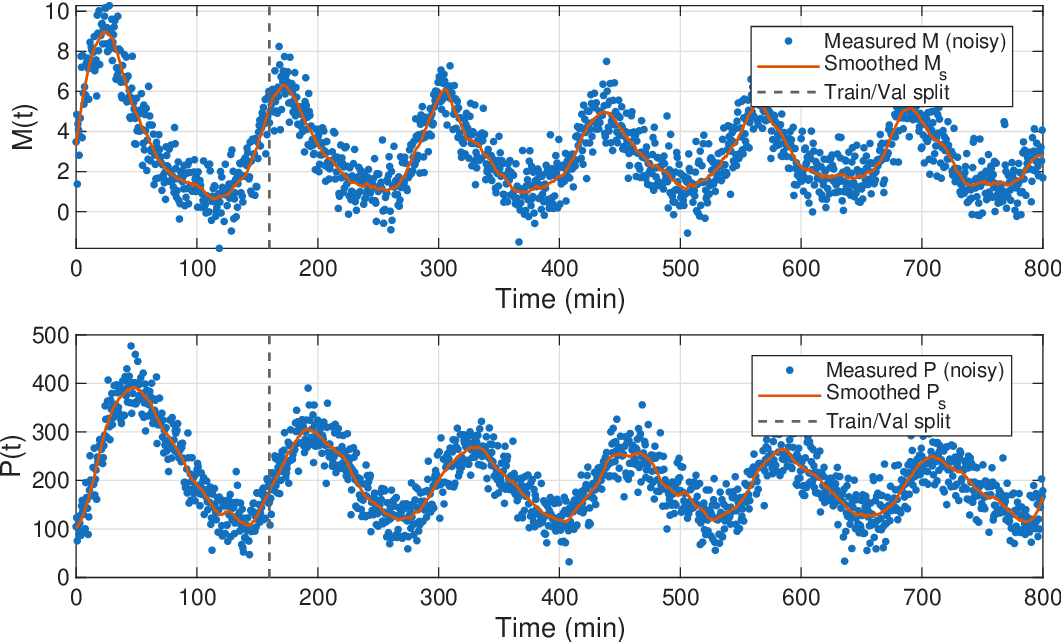}
    \includegraphics[width=0.48\linewidth]{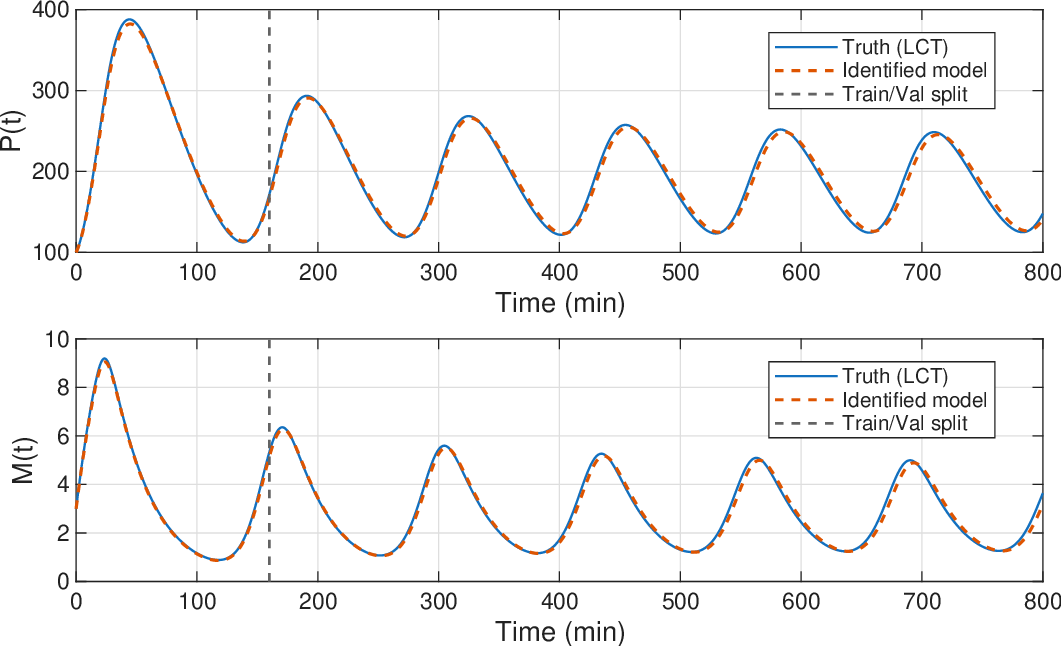}\\
\caption{LCT--SINDy identification results for noisy Hes1--mRNA data (noise intensity $0.5$) with sampling step $dt = 0.5\,\mathrm{min}$. Left column: noisy measurements (blue) together with the smoothed signals (red); the vertical dashed line indicates the split between training and validation data. Right column: comparison between the ground-truth trajectories (solid blue) and the trajectories generated by the identified LCT--SINDy model (dashed red). Top row: data generated with ground-truth parameters $\tau = 15$ and $n = 10$, correctly identified as $\tau^\ast = 15$ and $n^\ast = 10$. Bottom row: data generated with ground-truth parameters $\tau = 20$ and $n = 20$,
correctly identified as $\tau^\ast = 20$ and $n^\ast = 20$. Despite substantial measurement noise, the identified models accurately recover
both the delay and the chain length and reproduce the underlying dynamics.}

  \label{fig:lct-sindy-noisy-grid}
\end{figure*}
Our results show that the relative error of $z_p$ is consistently smaller than the 
corresponding derivative error across all sampling rates and noise intensities 
tested. This indicates that, in practical settings, whenever the experimental data 
are sufficiently informative to yield reasonable derivative estimates, they are 
also suitable for producing accurate LCT-based distributed-delay states. In other 
words, LCT is substantially more robust to sampling sparsity and measurement noise 
than numerical differentiation, which supports its use as a reliable mechanism for 
constructing delayed features in data-driven model discovery.\\

\subsection{An Example with Noisy Data}
\label{sec4.3}
In this subsection, we assess the performance of the $LCT$--$SIND^3y$ framework in the presence of measurement noise. To this end, we first solve the distributed-delay differential equation~\eqref{eq:hes1_distributed} to generate a reference trajectory, and then contaminate the simulated data with additive Gaussian noise, scaled relative to the standard deviation of the clean signal. The resulting noisy observations are sampled at intervals of $\Delta t = 0.5$ minutes with noise intensity $0.5$ (see Fig.~\ref{fig:lct-sindy-noisy-grid}).

As discussed in Section~\ref{Meth_cons}, prior to model identification the noisy data are denoised using the classical Savitzky--Golay filter in order to obtain a smooth approximation of the underlying trajectory. Sparse regression is then applied to the denoised data to identify the governing dynamics (see
Fig.~\ref{fig:lct-sindy-noisy-grid}). Despite the degraded data quality, the $LCT$--$SIND^3y$ framework successfully recovers the correct system structure,delay value, and chain length, demonstrating robustness to measurement noise.

In addition to the Hes1--mRNA regulatory system studied in Sections~4.1--4.3, we evaluated the $LCT$--$SIND^3y$ framework on two additional classes of delayed dynamical systems. In both cases, the method successfully recovered the correct model structure and delay, demonstrating that the proposed approach generalizes beyond the specific biological example considered earlier. Detailed numerical analyses of these two examples, including one that exhibits chaotic dynamics, are provided in the appendix; see Sections~\ref{Appendix} and Figures~\ref{S1},~\ref{fig:S2}.

\section{Discussion}
\label{Disc}
In recent years, the Sparse Identification of Nonlinear Dynamics (SINDy) framework has gained significant attention due to its success in discovering governing equations across various scientific and engineering disciplines. While early developments focused on ordinary and partial differential equations, recent extensions have enabled SINDy to identify systems with discrete delays. However, these approaches remain limited to models that explicitly include discrete delays in their libraries.

This study addresses a key gap by investigating the ability of SINDy to identify systems with distributed delays, which are common in biological and physical processes. In subsection~\ref{sec4.1}, we demonstrated that standard SINDy fails to recover broadly distributed delay effects when such dynamics are not represented in the candidate library. To overcome this limitation, we incorporated the LCT, which provides a natural representation for distributed delays and allows reconstruction of the delayed state variable directly from data. To the best of the authors’ knowledge, this work is the first to investigate the use of the LCT framework for recovering delayed state variables from noisy and sparsely sampled data. Moreover, we prove that the reconstructed delayed state is stable with respect to the input signal (see Proposition~\ref{p1}). In Section~\ref{sec4.2}, we analyzed how noise and sampling resolution affect the accuracy of the reconstructed delayed state. As shown in Fig.~\ref{fig:noise_and_sample}, the relative error between the identified and true delayed states increases with noise and poor sampling. Nevertheless, even under poor sampling and high noise levels, the reconstructed delayed state exhibits only small errors.

In Section~\ref{sec4.3}, we further evaluated the robustness of the $LCT-SIND^3y$ method using noisy training data. The results showed that the method successfully recovered the correct system structure and delay values for noise levels up to 0.5, maintaining close agreement with the true system dynamics. Despite these promising results, the accuracy of the method degrades once the
noise level or sampling sparsity exceeds the ranges explored here. This behavior is largely inherent to SINDy-type approaches, which rely on stable numerical
differentiation. In addition, we use the Bayesian information criterion (BIC) to select the model from a grid of candidate parameters, including $\tau$, $p$, and
the nonlinear coefficients. While BIC provides a principled trade-off between goodness of fit and model complexity, an exhaustive grid search becomes computationally expensive as the parameter space grows. Finally, the present linear chain formulation assumes an Erlang (gamma) kernel for the distributed delay. Although this family captures a broad class of unimodal delay distributions, it cannot represent arbitrary kernels (e.g., strongly multimodal or heavy-tailed delays), which may limit performance when the true memory
structure lies far outside the Erlang family.

Future work will focus on improving derivative estimation under high noise and sparse sampling conditions to further enhance the performance of the $LCT-SIND^3y$ framework. Additionally, we plan to develop more efficient and noise-resilient strategies for identifying delay values, chain lengths, and nonlinear parameters, reducing dependence on exhaustive grid searches and improving the reliability of model selection.
Overall, this study highlights the importance of extending SINDy to identify distributed delay systems and demonstrates that integrating the Linear Chain Trick provides a powerful and interpretable approach for handling noisy experimental data.\\

\section{Conclusion}
We introduced $LCT-SIND^3y$, a sparse identification framework for discovering nonlinear dynamical systems with distributed delays directly from the corresposnding time-series data. By incorporating the Linear Chain Trick into the SINDy methodology, distributed-delay effects are represented through a finite-dimensional ODE augmentation that preserves interpretability and enables standard sparse regression. Numerical experiments across multiple benchmark systems demonstrate that the proposed approach accurately recovers governing equations and delay characteristics under various sampling and noise conditions. These results show that $LCT-SIND^3y$ further extends data-driven model discovery to a broad class of systems with distributed-delay dynamics.

\section*{Code Availability}
\noindent All code used in this study is publicly available on GitHub at \url{https://github.com/m7eimmed/LCT-SINDy.git}.

\section*{Acknowledgment}
\noindent This study was supported by the National Science Foundation under Award number 2325267. Its contents are solely the responsibility of the authors and do not necessarily represent the official views of NSF.

\section*{Disclosure}
\noindent During the preparation of this work, the authors used Grammarly and ChatGPT to improve the writing style.  After using these tools, the authors reviewed and edited the content as necessary and took full responsibility for the publication's content.

\section*{Appendix}
\label{Appendix}
\setcounter{figure}{0}
\renewcommand{\thefigure}{S\arabic{figure}}
\paragraph{Additional Examples}
In addition to the Hes1--mRNA regulatory system studied in Sections~4.1--4.3, we evaluated the $LCT$--$SIND^3y$ framework on two additional classes of delayed dynamical systems. In both cases, the method successfully recovered the correct model structure and delay, demonstrating that the proposed approach generalizes beyond the specific biological example considered earlier.

\paragraph{Distributed-delay logistic growth model.}
As a first additional example, we considered the classical logistic growth equation with distributed-delay feedback,
\[
x'(t)
=
r\,x(t)\left(1 - \frac{1}{K}
\int_{0}^{\infty} K_{p,a}(s)\,x(t-s)\,ds \right),
\]
where $r>0$ is the intrinsic growth rate, $K>0$ is the carrying capacity, and $K_{p,a}$ is the Erlang kernel with mean delay $\tau = p/a$. Using only noisy observations of $x(t)$, the $LCT$--$SIND^3y$ procedure
accurately reconstructed the underlying distributed-delay dynamics, correctly identifying both the delay and the chain length from a grid of candidate values \( \tau \) and \( p \in \{1,2,3,\ldots,10\} \). The estimated parameters remained stable across a range of noise intensities (see Figure ~\ref{S1}).
\begin{figure}
  \centering
    \centering
    \includegraphics[width=0.48\linewidth]{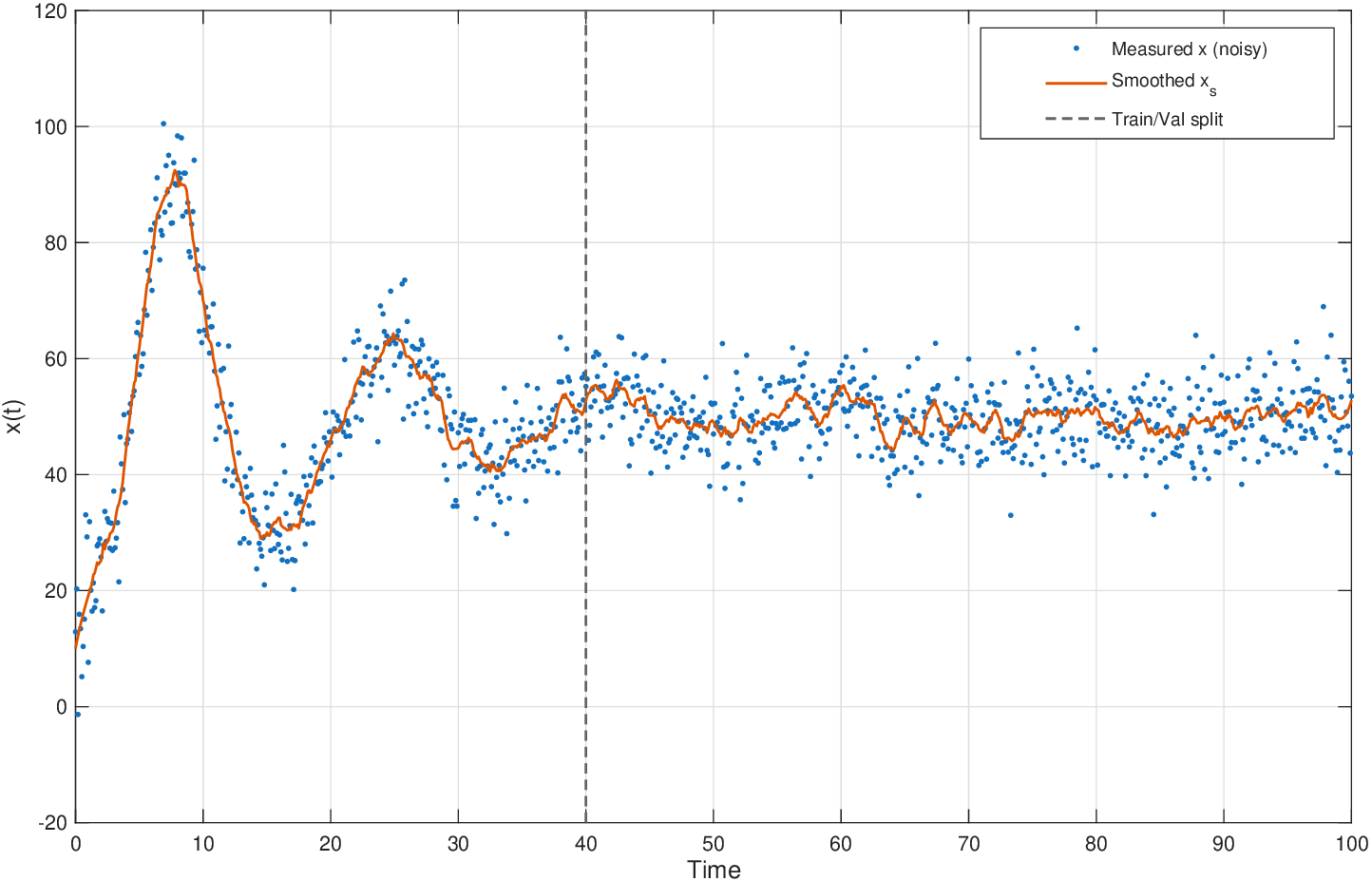} 
    \includegraphics[width=0.48\linewidth]{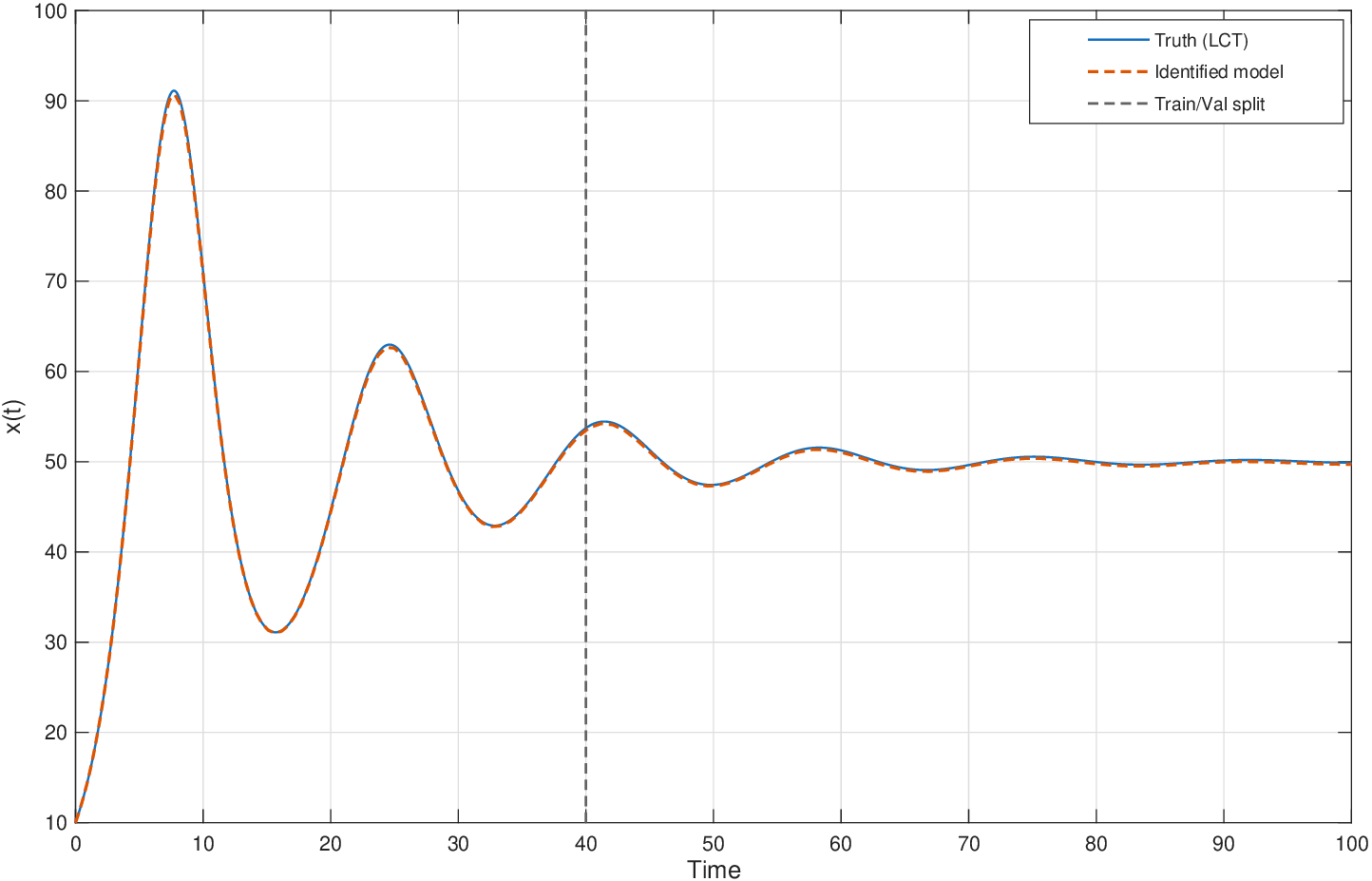}\\
    \includegraphics[width=0.48\linewidth]{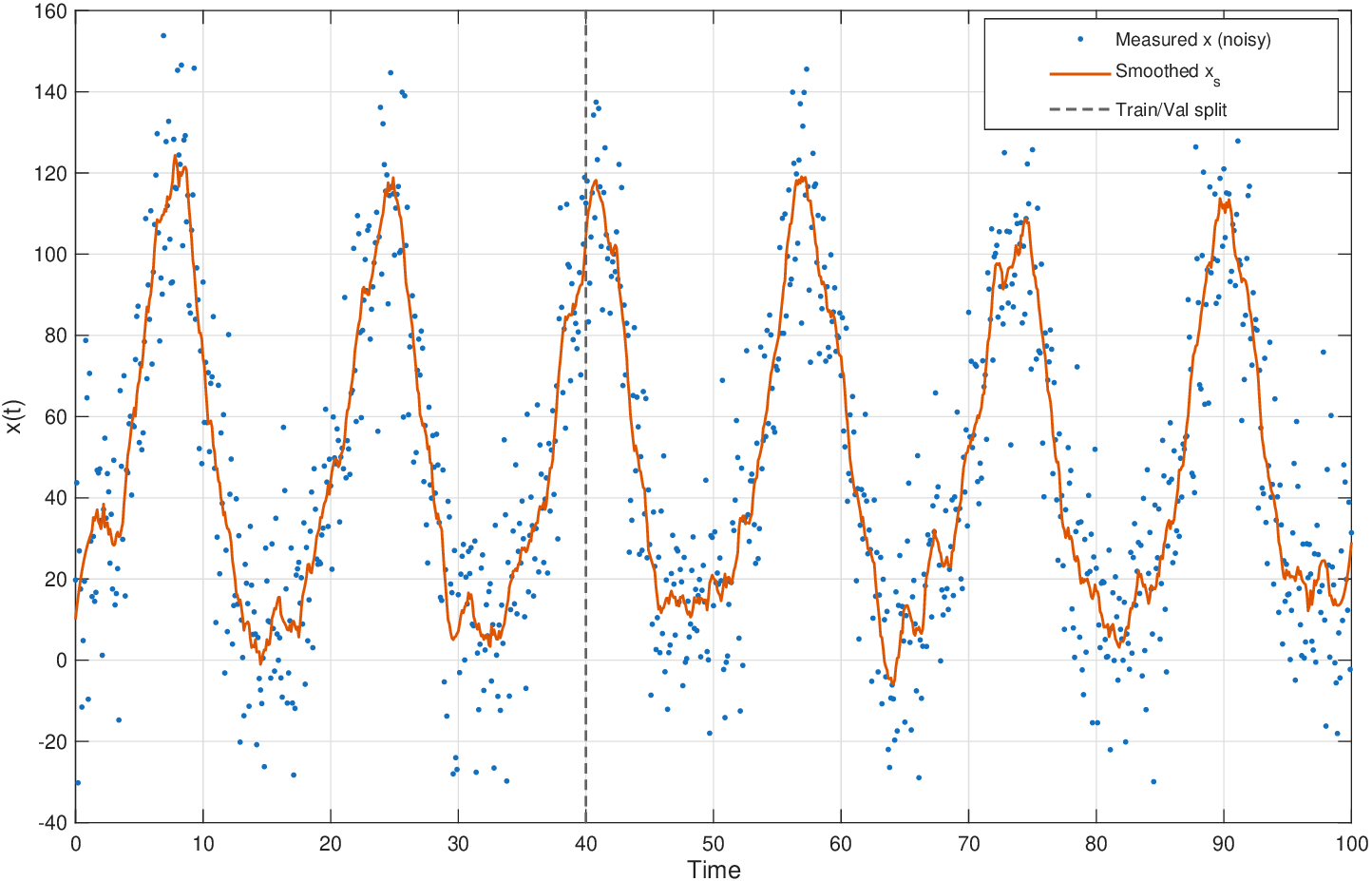}
    \includegraphics[width=0.48\linewidth]{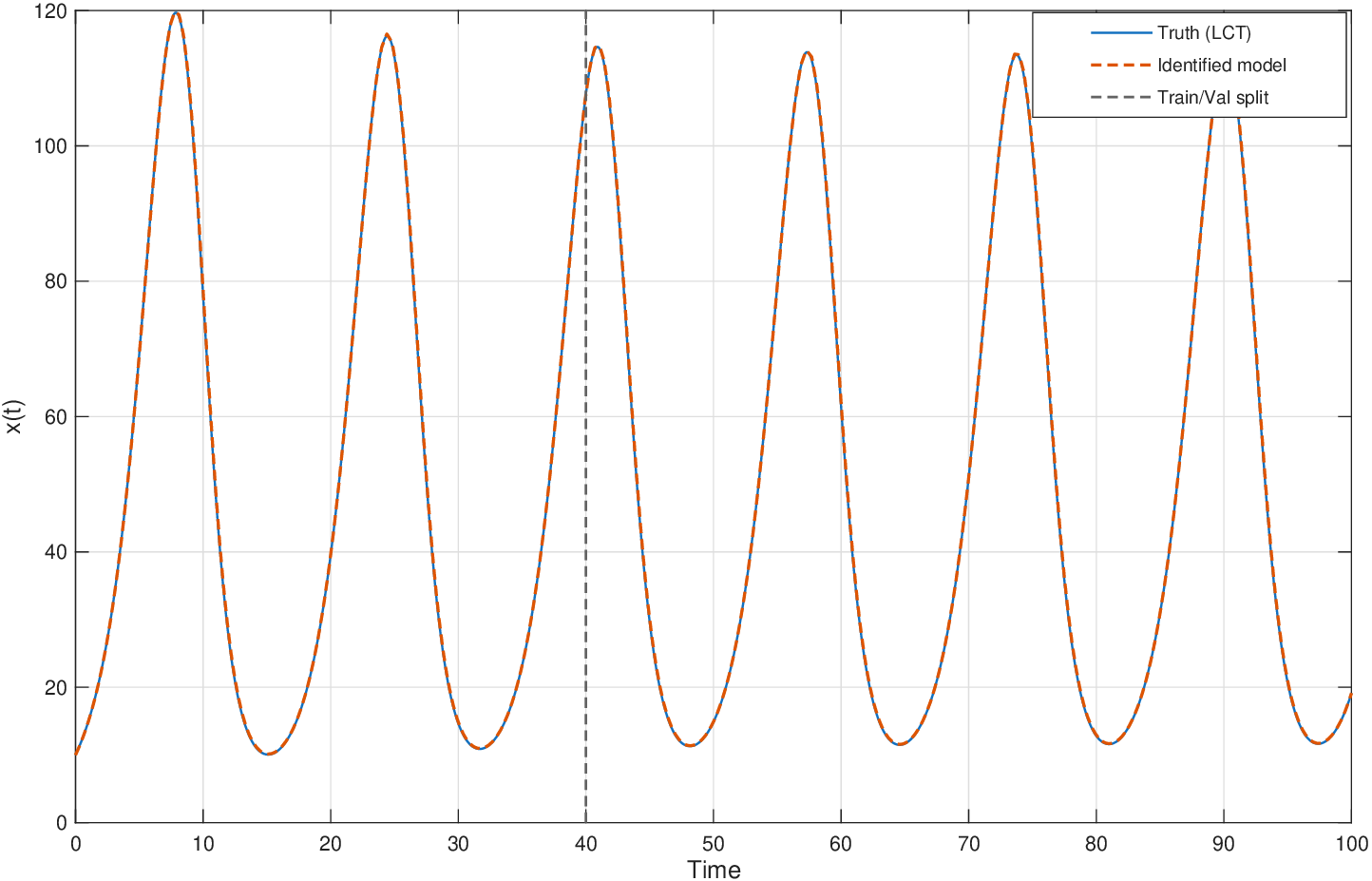}\\
\caption{LCT--SINDy identification results for noisy logistic model data} (noise intensity $0.5$) with sampling step $dt = 0.1$. Left column: noisy measurements (blue) together with the smoothed signals (red); the vertical dashed line indicates the split between training and validation data. Right column: comparison between the ground-truth trajectories (solid blue) and
the trajectories generated by the identified LCT--SINDy model (dashed red). Top row: data generated with a broad distributed delay ($n = 2$, $\tau = 4$), which stabilizes the dynamics. Bottom row: data generated with a narrower distributed delay ($n = 10$, $\tau = 4$), which induces oscillatory behavior. The identified models accurately capture the qualitative dynamical regimes associated with changes in the width of the distributed delay.
\label{S1}
\end{figure}

\paragraph{Ikeda delayed-feedback model.}
As a second benchmark, we considered the Ikeda model~\cite{IkedaMatsumoto1987}, a canonical example of a scalar delay differential equation exhibiting
delay-induced chaos, originally derived in the context of nonlinear optical systems with delayed feedback:
\[
x'(t) = -x(t) + \alpha \sin\bigl(x(t-\tau)\bigr),
\]
where $\alpha>0$ controls the feedback strength and $\tau>0$ is the delay. Despite its low-dimensional form, the presence of the delay renders the dynamics effectively infinite dimensional and highly sensitive to parameter perturbations.

Applying the proposed \(\mathrm{LCT}\text{--}\mathrm{SIND^3y}\) framework to observations of \(x(t)\), we performed a grid search over candidate delay values
\[
\tau \in \{1, 1.01, 1.02, \ldots, 2\}.
\]
The method successfully identified the correct delay and reconstructed the underlying nonlinear delayed interaction through its linear--chain approximation.
\begin{table}[ht]
\centering
\caption{Identified Ikeda models using LCT--SINDy under increasing noise levels.
All higher-order polynomial and mixed terms were identified as zero and omitted for clarity.
The ground-truth system is $\dot{x}(t) = -x(t) + 6\sin(x(t-\tau))$ with $\tau=1.59$.}
\label{tab:ikeda_noise_robustness}
\vspace{-0.3cm}
\begin{tabular}{c l c}
\toprule
\textbf{Noise level} 
& \textbf{Identified model} 
& \textbf{Identified delay $\tau^\ast$} \\
\midrule
$0$ 
& $\dot{x} = -1.0072\,x + 5.96005\,\sin(z)$ 
& $1.59$ \\

$0.02$ 
& $\dot{x} = -1.00519\,x + 5.86707\,\sin(z)$ 
& $1.59$ \\

$0.05$ 
& $\dot{x} = -0.996927\,x + 5.79499\,\sin(z)$ 
& $1.59$ \\

$0.10$ 
& $\dot{x} = -0.967207\,x + 5.60137\,\sin(z)$ 
& $1.59$ \\

$0.20$ 
& $\dot{x} = -0.874275\,x + 5.08357\,\sin(z)$ 
& $1.59$ \\
\bottomrule
\end{tabular}

\begin{minipage}{0.95\linewidth}
\footnotesize
\vspace{0.3cm}
 \( z \) denotes the terminal state of the linear chain and provides an approximation of the delayed state \( x(t-\tau) \).
\label{table6}
\end{minipage}
\end{table}

Across varying noise levels (Table~\ref{table6}), the framework consistently recovered the correct model structure, with only small errors in the estimated parameters. As the noise level increases, these errors grow modestly, primarily due to inaccuracies in numerical derivative estimation from denoised data. Because the underlying system is chaotic, even small errors in parameter estimates or derivatives can result in large deviations between the reconstructed and true trajectories, as shown in  Fig.~\ref{fig:S2}.
\begin{figure}
\centering
\includegraphics[width=1.00\textwidth]{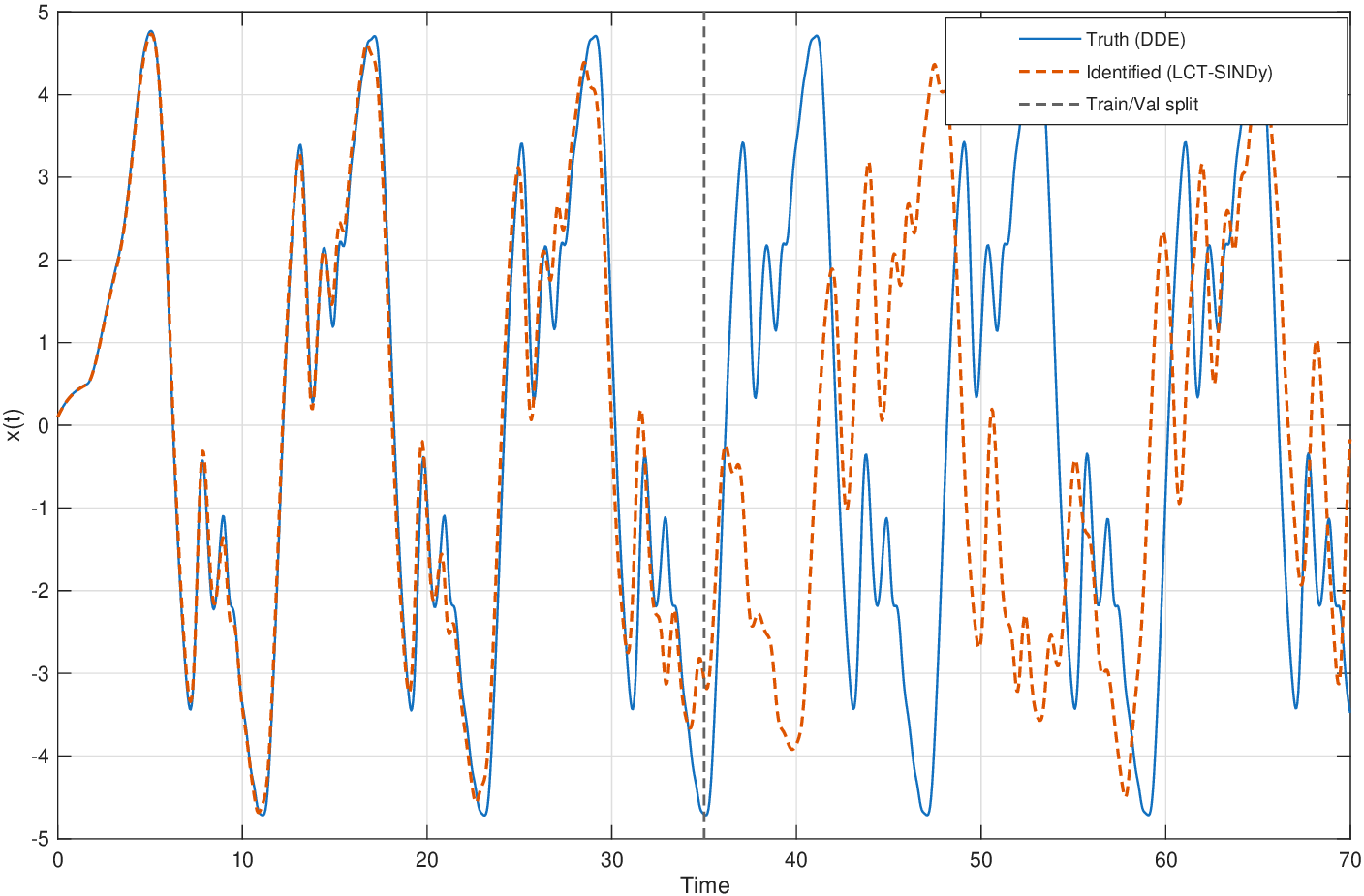} 

\caption{LCT--SINDy identification results for the Ikeda delay system with noise level $0$ and time step $\Delta t = 0.05$. The blue solid curve represents the ground-truth trajectory generated by the delay differential equation, while the red dashed curve corresponds to the trajectory produced by the identified LCT--SINDy model. The vertical dashed line indicates the split between training and validation data at $t = 35$; data on $[0,35]$ are used for model identification based on derivative information, and model performance is evaluated on the validation interval $[35,70]$.
Despite the chaotic nature of the system, the identified model accurately reconstructs the governing delayed nonlinear interaction.}

\label{fig:S2}
\end{figure}

For the selection criterion over the grid of candidate delay values, we use the derivative error rather than the trajectory error employed in previous examples. Specifically, the data are split into training and validation intervals. For each candidate delay, the sparse regression step is applied using the training data, yielding a candidate model associated with that delay value. Because the regression is performed on the training derivatives, models with incorrect delays may still fit the training derivative data by selecting appropriate library terms and coefficients. However, when these models are evaluated on the validation interval using the derivative error, candidates with incorrect delays exhibit larger errors, whereas the model corresponding to the true delay achieves the lowest validation error with the correct terms and coefficients. This selection criterion therefore enables the method to reliably identify the correct governing differential equation, even when the data are generated by a noisy chaotic system.


\end{document}